\newcommand{\chapeau}{{\rlap{\smash{\hbox{\lower4pt\hbox{\hskip1pt$\widehat{\phantom{u}}$}}}}}\mbox{ }}
\DeclareSymbolFont{cyrletters}{OT2}{wncyr}{m}{n}
\DeclareMathSymbol{\sha}{\mathalpha}{cyrletters}{"58}
 \newtheorem{thm}{Theorem}[section]
 \newtheorem*{pb*}{\textit{Open question}}
 \newtheorem*{cor*}{\textit{Corollary}}
 \newtheorem{cor}[thm]{Corollary}
 \newtheorem{lem}[thm]{Lemma}
 \newtheorem{prop}[thm]{Proposition}
 \theoremstyle{definition}
 \theoremstyle{remark}
 \theoremstyle{remark}
 \newtheorem{rem}[thm]{Remark}
 \theoremstyle{remark}
 \newtheorem{step}{Step}[thm]
 \newtheorem{substep}{Step}[step]
 \numberwithin{equation}{subsection}
 \newcommand{\To}{\longrightarrow}
 \renewcommand{\P}{\mathbb{P}}
 \newcommand{\Z}{\mathbb{Z}}
 \newcommand{\Gal}{\textup{Gal}}
 \newcommand{\coker}{\textup{Coker}}
 \newcommand{\Br}{\textup{Br}}
 \newcommand{\Pic}{\textup{Pic}}
 \newcommand{\CH}{\textup{CH}}
 \newcommand{\Hom}{\textup{Hom}}
 \newcommand{\inv}{\textup{inv}}
 \newcommand{\E}{\textup{E}}
\begin{document}

\title[Brauer\textendash Manin obstruction on fibred varieties]
{Towards the Brauer\textendash Manin obstruction on varieties fibred over the projective line}

\author{ Yongqi LIANG  }

\address{Yongqi LIANG
\newline B\^atiment Sophie Germain,
\newline Universit\'e Paris Diderot - Paris 7,
\newline Institut de Math\'ematiques de Jussieu,
 \newline  75013 Paris,\newline
 France}

\email{liangy@math.jussieu.fr}

\thanks{\textit{Key words} : zero-cycles, Hasse principle, weak approximation,
Brauer\textendash Manin obstruction}

\thanks{\textit{MSC 2010} : 11G35 (14G25, 14D10)}

\date{\today.}



\maketitle

\begin{abstract}
Recently Dasheng Wei proved that the Brauer\textendash Manin
obstruction is the only obstruction to the Hasse principle for 0-cycles of degree $1$ on
some fibrations over the projective line defined by bi-cyclic normic equations. In the present paper, we prove the exactness of
the global-to-local sequence for Chow groups of 0-cycles of such varieties, which signifies that the Brauer\textendash Manin
obstruction is also the only obstruction to weak approximation for 0-cycles of arbitrary degree.
Our main theorem also generalizes several existing results.
\end{abstract}

\tableofcontents

\section{Introduction}

Let $k$ be a number field and $\Omega_k$ be the set of all places of $k.$
Consider an algebraic $k$-variety $X,$ proper smooth and geometrically integral over $k$,
denote by $\Br(X)=\textup{H}_{\mbox{\scriptsize{\'et}}}^2(X,\mathbb{G}_m)$
the cohomological Brauer group of $X.$ We write simply $X_v=X\times_kk_v$ for each place $v\in\Omega_k.$
One defines the \emph{modified Chow group} of 0-cycles $\CH'_0(X_v)$ as in \cite{Wittenberg}
to be either the usual Chow group if $v$ is a non-archimedean place,
or $\coker[\CH_0(X_v\times_{k_v}\bar{k}_v)\buildrel{N_{\bar{k}_v|k_v}}\over\To \CH_0(X_v)]$ if $v$ is an archimedean place.

As a variant of Manin's pairing, one can define by taking the sum of local evaluations
$$\Br(X)\times \prod_{v\in\Omega_k}\CH'_0(X_v)\to\mathbb{Q}/\mathbb{Z}$$
$$(\mbox{ }b\mbox{ },\{z_v\}_v)\mapsto\sum_{v\in\Omega_k}\inv_v(\langle b,z_v\rangle_v),$$
where $\inv_v:\Br(k_v)\hookrightarrow\mathbb{Q}/\mathbb{Z}$ is the local invariant, \emph{cf.}
\cite{CT95}.
By class field theory, global 0-cycles are annihilated under the pairing,
we get a complex
$$\varprojlim_n\CH_0(X)/n\to\prod_{v\in\Omega_k}\varprojlim_n\CH'_0(X_v)/n\to \Hom(\Br(X),\mathbb{Q}/\mathbb{Z}), \leqno(\E)$$
where $/n$ denote the cokernel of the multiplication by $n.$
The exactness of $(E)$ means roughly that the failure of the Hasse principle and weak approximation for 0-cycles can be controlled
by the Brauer group $\Br(X).$
Summarized by Wittenberg \cite{Wittenberg}, the sequence is conjectured to be exact for all smooth proper geometrically integral varieties after the work of Colliot-Th\'el\`ene,
Kato, Sansuc, Saito, \emph{cf.} \cite{CTSansuc81} \cite{KatoSaito86} \cite{CT95}.

The conjecture was initially studied for conic bundle surfaces over the projective line,
\cite{CTSansuc81} \cite{chateletsurfaces0} \cite{chateletsurfaces} \cite{Salberger}. General results were obtained for varieties fibred
over the projective line  assuming various hypotheses, \cite{CT-SD} \cite{CT-Sk-SD} \cite{WittenbergLNM} \cite{Liang3} \cite{Liang2}.
In these papers, either all the fibres are supposed to be split or the Hasse principle/weak approximation is supposed to be satisfied
on most fibres.
Until recently, Wei considered fibrations defined by bi-cyclic  normic equations, on which neither of the two above assumptions is satisfied.
He proved that the Brauer\textendash Manin obstruction is the only obstruction to the Hasse principle for 0-cycles of degree $1$ on
such fibrations \cite[Thm. 4.1]{Wei}, sophisticated calculation of Brauer groups was involved in his proof.
However, Wei's discussion concerns only the Hasse principle for 0-cycles of degree $1$ and his method does not extend directly to
0-cycles of arbitrary degree.
In this paper, we will prove for such fibrations the exactness of the sequence $(\E)$ as stated in Corollary \ref{maincor},
which concerns the weak approximation property for 0-cycles of arbitrary degree.
It follows from our main result Theorem \ref{main_thm} after a remark of Smeets.
Actually, Theorem \ref{main_thm} generalizes several existing results at the same time, and it also gives an alternative proof of Wei's result,
\textit{cf.} \S \ref{application}.

A detailed proof is given in \S \ref{proof} after some preliminaries presented in \S \ref{preliminaries}.
We apply the fibration method and we make use of not only the vertical Brauer group (to show local solvability)
but also non-constant elements of the Brauer group of the generic fibre (to show orthogonality to the Brauer
group of a selected fibre). We need the tool of generalized Hilbertian subsets to deal with difficulties
caused by degrees of 0-cycles.
In the last section \S \ref{application}, we discuss explicit examples.

\section{Statement of the main result}\label{main_result}

Let $X$ be a smooth proper geometrically integral variety defined over a number field $k.$ Fix an integer $\delta.$
If the existence of a family $\{z_v\}_{v\in\Omega_k}$ of local 0-cycles
of degree $\delta$ orthogonal to $\Br(X)$ implies the existence of a global 0-cycle of degree $\delta$ on $X,$
then we say that \emph{the Brauer\textendash Manin obstruction is the only obstruction to the Hasse principle}
for 0-cycles of degree $\delta$ on $X.$ We say that
\emph{the Brauer\textendash Manin obstruction is the only obstruction to weak approximation}
for 0-cycles of degree $\delta$ on $X$ if the following statement is satisfied
\begin{itemize}
  \item[] For any positive integer $N$ and any finite set $S$ of places of $k,$ given an arbitrary family of local 0-cycles
            $\{z_v\}_{v\in\Omega_{k(\theta)}}$ of degree $\delta$ orthogonal to $\Br(X),$ then there exists a global
            0-cycle $z=z_{S,N}$ of degree $\delta$ on $X$ such that $z$ and $z_v$ have the same image in
            $\CH_0(X_v)/N$ for all $v\in S.$
\end{itemize}
These definitions date back to \cite[page 69]{CT-SD}.

Let $\pi:X \to \mathbb{P}^1$ be a fibration (dominant morphism whose generic fibre is geometrically integral)
defined over a number field $k.$
Suppose that the variety $X$ is smooth projective and geometrically integral.
We denote by $X_\eta$ the generic fibre of $\pi,$ and we write
$X_{\bar{\eta}}=X_\eta\times_{k(t)}\overline{k(t)}.$
Suppose that $\Pic(X_{\bar{\eta}})$
is torsion-free and $\Br(X_{\bar{\eta}})$ is finite, then
$\Br(X_\eta)/\Br(k(t))$ is a finite group. Let $\Lambda\subset \Br(X_\eta)$ be
a finite subset generating $\Br(X_\eta)$ modulo $\Br(k(t)).$
Let $F$ be an integral $1$-codimensional closed subvariety of $X.$
Consider the residue map $\partial_F:\Br(k(X))\to H^1(k(F),\mathbb{Q}/\mathbb{Z}),$
if there exists an element $b\in \Lambda\subset \Br(X_\eta)$ such that $\partial_F(b)\neq0$
then $F$ is contained in a certain closed fibre $X_m$ of $\pi.$
Let $\{m_r;1\leqslant r\leqslant l\}$ be a set of closed points of $\mathbb{P}^1.$
We denote by $X_r$ the fibre $X_{m_r}$ and we write $Z=\bigsqcup_rX_r$ and $Y=X\setminus Z.$

\begin{thm}\label{main_thm}
With the above notation,
let $\pi:X \to \mathbb{P}^1$ be a fibration satisfying
\begin{itemize}
\item[(ab-sp)] all closed fibres are abelian-split;
\item[(gen)] $\Pic(X_{\bar{\eta}})$ is torsion-free and $\Br(X_{\bar{\eta}})$ is finite;
\item[$(\Br)$] there exists a finite subset $\Lambda\subset \Br(X_\eta)$ generating $\Br(X_\eta)$ modulo
$\Br(k(t))$ such that $\Lambda\subset \Br(Y)$ where $Y\subset X$ is an open with complement $X\setminus Y$
a disjoint union of closed split fibres.
\end{itemize}
Let $\textsf{Hil}\subset \mathbb{P}^1$ be a generalized Hilbertian subset of $\mathbb{P}^1,$ we suppose respectively that
for all $\theta\in \textsf{Hil}$
\begin{itemize}
\item[(1)] the Brauer\textendash Manin obstruction is the only obstruction to the Hasse principle for rational points (or 0-cycles of degree $1$)
on $X_\theta;$
\item[(2)] the Brauer\textendash Manin obstruction is the only obstruction to weak approximation for rational points (or 0-cycles of degree $1$)
on $X_\theta;$
\item[(3)] we suppose \emph{(2)}, moreover, the induced map $\CH_0(X_v)\to \CH_0(\mathbb{P}^1_v)\simeq\mathbb{Z}$
is assumed injective for almost all places $v.$
\end{itemize}
Then for every integer $\delta,$ we have respectively
\begin{itemize}
\item[(1)] the Brauer\textendash Manin obstruction is the only obstruction to the Hasse principle for 0-cycles of degree $\delta$
on $X;$
\item[(2)] the Brauer\textendash Manin obstruction is the only obstruction to weak approximation for 0-cycles of degree $\delta$
on $X;$
\item[(3)] the sequence $(\E)$ is exact for $X.$
\end{itemize}
\end{thm}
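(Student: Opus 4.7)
The plan is to apply the fibration method, reducing the local-global question on $X$ to the analogous one on a carefully chosen fibre $X_\theta$ with $\theta\in\textsf{Hil}(k)$. Starting from a family $\{z_v\}_{v\in\Omega_k}$ of local $0$-cycles of degree $\delta$ orthogonal to $\Br(X)$, I aim to produce a global $0$-cycle of degree $\delta$ on $X$ (with the weak or strong approximation property in parts (2) and (3)). Standard preliminary reductions\textemdash writing each $z_v$ as a difference of effective cycles, applying a moving lemma to shift supports into a prescribed dense open, and approximating so that each closed point of the support lies over a closed point of $\mathbb{P}^1_v$\textemdash convert the problem into one about the push-forward family $\{\pi_*z_v\}$ on $\mathbb{P}^1$.

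The crucial new ingredient, compared with \cite{Harari} and \cite{CT-Sk-SD}, is hypothesis $(\Br)$: the finite set $\Lambda\subset\Br(X_\eta)$ generating $\Br(X_\eta)$ modulo $\Br(k(t))$ is taken inside $\Br(Y)$, with $X\setminus Y$ a disjoint union of split fibres. I would apply Harari's formal lemma to $\Lambda$: by perturbing each $z_v$ at a suitable finite set of auxiliary places while keeping its support inside $Y_v$, one arranges $\sum_v\inv_v\langle b,z_v\rangle=0$ for every $b\in\Lambda$, without spoiling orthogonality to $\Br(X)$ itself\textemdash in particular the vertical Brauer subgroup $\Br_{\mathrm{vert}}(X/\mathbb{P}^1)$ is still annihilated.

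Next I would invoke the generalised Hilbertian property (Proposition \ref{Hilbert's irreducibility}) to choose $\theta\in\textsf{Hil}(k)$ that is $v$-adically close to $\pi(z_v)$ at a prescribed finite set of places $S$ and for which the specialisations of all $b\in\Lambda$ are well-behaved outside the set $\{m_r\}$. A reciprocity computation on $\mathbb{P}^1$, comparing the local invariants of each $b\in\Lambda$ along the original family with those along a family of $0$-cycles supported on $X_\theta$, then produces a degree-$\delta$ family of local $0$-cycles on $X_\theta$ orthogonal to $\Br(X_\theta)$. Hypothesis (1), (2), or (3) applied to $X_\theta$ finally yields a global $0$-cycle on $X_\theta\subset X$ with the required property. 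The special fibres $X_r$ are accommodated by (ab-sp): after an abelian base change each $X_r$ possesses a multiplicity-one geometrically integral component, on which local $0$-cycles of any degree impose no additional Brauer constraint.

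The main obstacle will be to coordinate these steps at a single $\theta$: (a) the formal-lemma perturbation must preserve orthogonality to the vertical part while staying inside $Y_v$; (b) the chosen $\theta$ must simultaneously lie in $\textsf{Hil}$, approximate $\pi(z_v)$ for $v\in S$, and avoid non-trivial residues of every $b\in\Lambda$ outside the designated split fibres; and (c) archimedean places, places of bad reduction, and the fibres $X_r$ must be handled uniformly in the reciprocity identity. For part (3), the assumption $\CH_0(X_v)\hookrightarrow\mathbb{Z}$ for almost all $v$ is what transfers strong approximation from $\mathbb{P}^1$ back to $X$; via the standard inverse-limit argument this upgrades the statement to exactness of the complex $(\E)$ for $X$.
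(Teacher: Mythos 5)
Your high-level strategy---fibration method, formal lemma applied to $\Lambda$, a variant of Hilbert irreducibility to pick a good $\theta$, then conclude on the fibre---does match the paper's plan, but there are two genuine gaps.

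First, the notation ``$\theta\in\textsf{Hil}(k)$'' misrepresents the key object. For general $\delta$, the point $\theta$ must be a \emph{closed} point of some large degree $d$ (with $d\equiv\delta\ (\mathrm{mod}\ ad_0)$ in the paper's notation), and $\textsf{Hil}$ is a set of closed points of $\mathbb{P}^1$. Proposition \ref{Hilbert's irreducibility} is precisely the tool that produces such a closed point simultaneously in $\textsf{Hil}$, close to the separable push-forwards $\pi_*(\tau_v)$ at $v\in S_2$, an $S\cup I$-integer, and---crucially---with a prescribed valuation $w_i(f(P_i))=1$ at a single carefully chosen place $w_i$ of $k_i$ for each non-split fibre over $P_i$. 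Together with the vertical classes $A_{i,j}$ (built from (ab-sp) via corestriction of cyclic algebras) and Lemma \ref{LangWeil}, that valuation condition is what forces the reduction of $X_\theta$ at $w_i$ to be split; one does not ``avoid'' the bad fibres, one arranges that the unique intersection of $\tilde\theta$ with each $T_i$ happens at a place where Lang--Weil still gives a smooth point.

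Second, and more seriously, your sentence ``a reciprocity computation on $\mathbb{P}^1$ \dots\ then produces a degree-$\delta$ family of local $0$-cycles on $X_\theta$ orthogonal to $\Br(X_\theta)$'' elides the central difficulty. After the formal lemma one has $\sum_{v\in S_2}\inv_v\langle A,\tau_v\rangle=0$ only over the finite set $S_2$, whereas one needs $\sum_{w\in\Omega_{k(\theta)}}\inv_w(A(M_w))=0$ for every $A\in\Lambda$ over all places of $k(\theta)$. Places above $S_2$ contribute what you want by continuity of the pairing and the projection formula, and places where $\theta$ reduces into the good locus $\mathcal{V}_0$ contribute zero; but places above the split fibres $m_r$ (and $\infty$) give nontrivial sums of residue evaluations $\sum_{w\in\Omega_r}w(Q_r(\theta))\cdot\partial_{A,r,M(w)}(F_{r,M(w)})$. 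The paper removes this by (i) constructing one assisting place $v_r$ for each $r$, split completely in a large compositum $F\cdot k'$; (ii) introducing the auxiliary finite set $E\subset\Br(k(t))$ of classes concentrated at $m_r$, whose vanishing in $\Br(k'(t))$ (Tsen) and the computation in Step~\ref{computation for w r} show that the offending Frobenius sum lies in $\Gal(K'_r/K_rk'_r)$; and (iii) invoking the geometric Chebotarev density theorem over the residue field at $v_r$ to replace $M_{w^0_r}$ by a new point $M'_{w^0_r}$ whose Frobenius exactly cancels the sum. Without this modification step the orthogonality of $\{M_w\}$ to $\Lambda$---hence to $\Br(X_\theta)$ via Proposition~\ref{comparison Br}---is simply not available, and hypotheses (1)/(2)/(3) cannot be applied on the fibre.
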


\begin{rem}\label{remark on hypotheses}
The induced map $\CH_0(X_v)\to \CH_0(\mathbb{P}^1_v)\simeq\mathbb{Z}$ is the degree map, it is injective
for almost all $v$ if $X_\eta$ is assumed rationally connected thanks to a theorem of
Koll\'ar\textendash Szab\'o \cite[Thm. 5]{Kollar-Szabo}. It is also well-known that if $X_\eta$ is rationally connected,
the hypothesis (gen) is automatically satisfied, \emph{cf.} \cite[II Cor. 3.4]{Br}, the proof of
\cite[Prop. 2.11]{CTRaskind}, and \cite[Cor. 4.18]{Debarre}.

Smooth proper models of the equation $N_{K/k}(\overrightarrow{\textbf{x}})=P(t)$ with $K/k$ an abelian extension satisfy
the hypothesis (ab-sp), \emph{cf.} \cite[Lem. 3.2]{Liang5}.
These models are fibred over $\mathbb{P}^1$ by the parameter $t,$ almost all fibres are smooth compactifications
of torsors under algebraic tori, the arithmetic hypothesis (1) and (2) are satisfied.

Concerning the hypothesis $(\Br)$, we have a more detailed discussion in \S \ref{application}. One of the applications is the following corollary concerning  varieties discussed in \cite{Wei}.
\end{rem}

\begin{cor}\label{maincor}
Let $K/k$ be an abelian extension with Galois group $\Z/n\Z\times\Z/n\Z.$ Let $P(t)$ be an irreducible polynomial over $k$ and $L=k[t]/(P(t)).$ Suppose that $L$ contains a degree $n$ cyclic subfield of $K$. Then for any smooth proper model $X$ of the equation $N_{K/k}(\overrightarrow{\textup{\textbf{x}}})=P(t),$ the sequence $(\E)$ is exact.
\end{cor}

\section{Preliminaries to the proof}\label{preliminaries}
At first, we state some preliminaries and give references of each statement.
In the subsection \S \ref{Hilbert}, we prove a variant of Hilbert's irreducibility theorem, which
is one of the main ingredients of this work.

\subsection{Formal lemma}

\begin{lem}[{\cite[Lem. 4.5]{CT-Sk-SD}}, original version {\cite[Cor. 2.6.1]{Harari}}]\label{formal lemma}
Let $X$ be a smooth proper geometrically integral variety defined over a number field $k.$ Let
$U$ be a non-empty open subset of $X$ and $\{A_1,\ldots,A_n\}\subset \Br(U)\subset \Br(k(X)).$
We denote by $B$ the intersection of $\Br(X)$ and the subgroup generated by the $A_i$'s in $\Br(k(X)).$

Suppose that for every $v\in\Omega_k,$ there exists a 0-cycle $z_v$ on $X_v$ of degree $\delta$
supported in $U_v$ such that the family $\{z_v\}_{v\in\Omega_k}$ is orthogonal to $B.$

Then, for all finite set $S$ of places of $k,$ there exists a finite set $S'$ of places of $k$ containing
$S,$ and for each $v\in S'$ there exists a 0-cycle $z'_v$ on $U_v$ of degree $\delta$ such that
$$\sum_{v\in S'}\inv_v(\langle A_i,z'_v\rangle_v)=0$$
and moreover $z'_v=z_v$ for all $v\in S.$
\end{lem}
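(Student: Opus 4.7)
The plan is to adapt Harari's original formal lemma (proved for rational points) to 0-cycles. The subgroup $\A \subset \Br(k(X))$ generated by $\{A_1,\ldots,A_n\}$ is finite, being a finitely generated subgroup of a torsion group; hence so is $\A/B$. I would then spread out the data over a localisation $\textup{Spec}(\mathcal{O}_{k,T_0})$ for a finite set of places $T_0 \supset S$ containing the archimedean places: choose integral models $\mathcal{U} \subset \mathcal{X}$ of $U \subset X$ so that each $A_i$ extends to $\Br(\mathcal{U})$ and each element of $B$ extends to $\Br(\mathcal{X})$. For any $v \notin T_0$ and any 0-cycle $z_v$ supported in $U_v$, the local invariant $\inv_v(\langle A, z_v\rangle_v)$ then vanishes for every $A \in \A$, by unramifiedness.

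Consequently, $c(A) := \sum_{v \in T_0}\inv_v(\langle A, z_v\rangle_v)$ defines a homomorphism $c\colon \A \to \Q/\Z$. The orthogonality of $\{z_v\}$ to $B$ gives $\sum_{v \in \Omega_k}\inv_v(\langle A, z_v\rangle_v) = 0$ for $A \in B$, which, combined with the vanishing outside $T_0$, forces $c|_B = 0$. Thus $c$ factors as a homomorphism $\bar c\colon \A/B \to \Q/\Z$.

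The heart of the proof is the following realisability statement: \emph{for every $\psi \in \Hom(\A/B, \Q/\Z)$ and every finite set of places $\Sigma$, there exist a finite set $T'$ disjoint from $\Sigma$ and, for each $v \in T'$, a 0-cycle $y_v$ of degree zero supported in $U_v$ such that $\psi(A \bmod B) = \sum_{v \in T'}\inv_v(\langle A, y_v\rangle_v)$ for all $A \in \A$.} This is the technical content of Harari's original formal lemma, extended to 0-cycles, and rests on the Chebotarev density theorem: a non-zero class in $\A/B$ must have non-trivial residue along some irreducible divisor $D \subset X \setminus U$ (otherwise it would lie in $\Br(X) \cap \A = B$); this residue cuts out a finite cyclic extension of $k(D)$, and Chebotarev produces infinitely many places $v$ at which a prescribed Frobenius is realised, yielding a closed point on $D_{k_v}$ that, after a small deformation into $U_{k_v}$, provides the desired $y_v$.

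To finish, apply the realisability with $\psi = -\bar c$ and $\Sigma = T_0$, obtaining $T'$ and degree-zero 0-cycles $y_v$ on $U_v$, and set $S' = T_0 \sqcup T'$, $z'_v = z_v$ for $v \in T_0$, and $z'_v = y_v + w^0_v$ for $v \in T'$, where $w^0_v$ is any chosen 0-cycle of degree $\delta$ on $U_v$. The shift by $\{w^0_v\}$ introduces an extra homomorphism $\alpha \in \Hom(\A/B, \Q/\Z)$ (automatically factoring through $\A/B$ by the good-reduction vanishing), which one absorbs by choosing the $w^0_v$ first and applying the realisability to $-\bar c - \alpha$ instead of $-\bar c$. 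Then $\sum_{v \in S'}\inv_v(\langle A_i, z'_v\rangle_v) = 0$ for every $i$, as required. The main obstacle is the Chebotarev step in the realisability statement; everything else is formal bookkeeping about integral models, degrees, and places.
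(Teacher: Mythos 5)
Your dual reformulation (a character $\bar c\colon\A/B\to\mathbb{Q}/\mathbb{Z}$ read off the pairing over $T_0$, then cancelled by realising $-\bar c$ at auxiliary Chebotarev places) is the right architecture and matches the proofs in Harari and Colliot-Th\'el\`ene--Skorobogatov--Swinnerton-Dyer; the Lemma is only cited in the paper, so these are the proofs to compare against. Two steps, however, do not hold as written. The claim that $\inv_v(\langle A,z_v\rangle_v)=0$ for all $v\notin T_0$, all $A\in\A$ and every 0-cycle $z_v$ supported in $U_v$, ``by unramifiedness,'' is false: the model $\mathcal{U}$ is not proper over $O_{k,T_0}$, so a closed point $P$ of $U_v$ need not extend to a section of $\mathcal{U}\otimes O_v$ --- its closure in the proper model $\mathcal{X}$ may meet $\mathcal{X}\setminus\mathcal{U}$, and then $A\in\Br(\mathcal{U})$ says nothing about $A(P)$. (Take $U=\mathbb{A}^1\subset\mathbb{P}^1=X$, $A$ a quaternion algebra ramified at $\infty$, and a $k_v$-point with large $|\cdot|_v$.) What is true, and is all you actually use, is the vanishing at $v\notin T_0$ for $A\in B\subset\Br(\mathcal{X})$, which holds because $\mathcal{X}$ is \emph{proper} and the valuative criterion forces every 0-cycle to extend integrally; so $c|_B=0$ and the descent of $\alpha$ to $\A/B$ survive, but for a different reason than the one you give, and the sentence as written wrongly suggests the infinite sum $\sum_v\inv_v(\langle A_i,z_v\rangle_v)$ is defined for $A_i\notin B$.

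Second, the ``absorb $\alpha$'' step is circular: $\alpha$ is a sum over $T'$, and $T'$ is exactly the output of the realisability into which you want to feed $-\bar c-\alpha$. The clean repair, in the spirit of your own good-reduction argument, is to choose each $w^0_v$ to be a degree-$\delta$ 0-cycle supported at points of $U_v$ that are \emph{integral} on (the smooth locus of) $\mathcal{U}$ over $O_{k,T_0}$ --- such points exist for $v$ outside a larger finite set by Lang--Weil and Hensel's lemma --- so that $\inv_v(\langle A,w^0_v\rangle_v)=0$ for \emph{all} $A\in\A$, hence $\alpha=0$ and there is nothing to absorb. Alternatively, phrase the realisability directly for 0-cycles of degree $\delta$, which is effectively what the cited sources do. Relatedly, your Chebotarev sketch produces a single closed point of positive degree, not a degree-zero 0-cycle, so even the deferred realisability step would need this same repair to match the statement you wrote for it.
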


\subsection{Moving lemmas for 0-cycles}

We say that a 0-cycle $z=\sum n_PP$ is \emph{separable} if each non-zero integer $n_P$ equals either $1$ or $-1.$
Let $k$ be a topological field (of characteristic $0$) and $\bar{k}$ be its fixed algebraic closure.
Let $z'$ be an effective 0-cycle of degree $d>0$ on a $k$-variety $V,$ we express it as a sum of closed points $z'=\sum P'_i$
(not necessarily separable, the closed points $P'_i$ may be equal for different $i$).
We say that an effective 0-cycle $z=\sum P_i$ of degree $d$ is \emph{sufficiently close} to $z'$ if
(after a permutation of the indices) we have $k(P_i)=k(P'_i)$ and $P_i$ is sufficiently close to
$P'_i$ in the topological space $V(k(P'_i)).$

\begin{lem}[{\cite[\S 3]{CT05}}]\label{moving 1}
Let $X$ be a integral regular variety defined over a perfect field $k,$ and $U$ be a non-empty open
subset of $X.$ Then every 0-cycle $z$ of $X$ is rationally equivalent on $X$ to a 0-cycle $z'$ supported
in $U.$
\end{lem}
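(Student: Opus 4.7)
The plan is to reduce to moving a single closed point of $X$, produce an integral curve through it that is regular at that point and not contained in $X\setminus U$, and finally move the point along this curve by means of a suitable rational function.

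By $\mathbb{Z}$-linearity of rational equivalence it is enough to treat a single closed point $P$ of $X$. If $P\in U$ there is nothing to do, so set $Z:=X\setminus U$ and assume $P\in Z$; the case $\dim X=0$ is vacuous, so assume $\dim X\geq 1$. First I would construct an integral curve $C\subset X$ through $P$ which is regular at $P$ and satisfies $C\not\subset Z$. Since $X$ is regular, $(\mathcal{O}_{X,P},\mathfrak{m})$ is a regular local ring; taking $d-1$ sufficiently general $k$-linear combinations of a regular system of parameters at $P$ (where $d=\dim X$) and cutting out their common zero locus in a neighbourhood of $P$ yields, by a local Bertini-type argument available over the perfect field $k$, a $1$-dimensional scheme regular at $P$. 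Its irreducible component through $P$, together with its closure in $X$, gives the required integral curve $C$, and sufficient genericity ensures $C\not\subset Z$ since $Z$ is a proper closed subset of $X$.

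Let $\nu:\tilde C\to C$ be the normalization. Because $C$ is regular at $P$, $\nu$ is an isomorphism over $P$: there is a unique closed point $Q\in\tilde C$ above $P$ with $k(Q)=k(P)$, so $\nu_{*}(Q)=P$ as $0$-cycles. The preimage $T':=\nu^{-1}(C\cap Z)$ is a finite set of closed points of $\tilde C$ not containing $Q$. By weak approximation for the discrete valuations attached to distinct closed points of the smooth curve $\tilde C$, there exists $f\in k(\tilde C)^{*}$ with $v_{Q}(f)=1$ and $v_{P'}(f)=0$ for every $P'\in T'$. Decomposing into positive and negative parts, $\mathrm{div}_{\tilde C}(f)=Q+E'-D$ with $E',D\geq 0$ effective, of disjoint supports, and both supported on $\tilde C\setminus(T'\cup\{Q\})$. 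Consequently $\nu(\mathrm{supp}(E')\cup\mathrm{supp}(D))\subset C\setminus Z\subset U$.

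By the very definition of rational equivalence on $X$, the pushforward $\nu_{*}\mathrm{div}_{\tilde C}(f)$ vanishes in $\CH_0(X)$, whence $P=\nu_{*}(Q)$ is rationally equivalent on $X$ to $z':=\nu_{*}(D)-\nu_{*}(E')$, a $0$-cycle supported in $U$. Doing this for every closed point in $\mathrm{supp}(z)$ and combining $\mathbb{Z}$-linearly yields the statement. I expect the main obstacle to lie in the first step: finding an integral curve through $P$ that is simultaneously regular at $P$ and not contained in $Z$. This local Bertini statement is the only place where the regularity of $X$ is genuinely used, and over non-algebraically closed perfect base fields it requires some care.
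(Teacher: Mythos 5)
Your strategy --- reduce to a single closed point $P$, produce an integral curve $C\subset X$ through $P$, regular at $P$ and not contained in $Z=X\setminus U$, normalize to $\nu:\tilde C\to C$, and move $P=\nu_*(Q)$ along $\tilde C$ by a function with $v_Q(f)=1$ and $v_{P'}(f)=0$ on $\nu^{-1}(C\cap Z)$ --- is exactly the curve--moving argument of the reference cited for this lemma. The second half of your write-up (uniqueness of $Q$ above the regular point $P$, weak approximation on the semilocal Dedekind ring of $\tilde C$ at finitely many closed points, and $\nu_*\mathrm{div}_{\tilde C}(f)\sim 0$ by properness of $\tilde C\to X$) is correct and complete.

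The gap is exactly where you say it is, and in fact the specific version of local Bertini you sketch genuinely fails over finite perfect fields. Taking $d-1$ general $k$-linear combinations of a lifted regular system of parameters works when $k$ is infinite: the good coefficient matrices form a nonempty Zariski open of $\mathbb{A}^{d(d-1)}_k$, which then has a $k$-point. Over $\mathbb{F}_q$ this open can have no rational point: take $X=\mathbb{A}^2_{\mathbb{F}_q}$, $P=(0,0)$, $f_1=x$, $f_2=y$, and $Z=V\bigl(\prod_{(a:b)\in\mathbb{P}^1(\mathbb{F}_q)}(ax+by)\bigr)$; every $\mathbb{F}_q$-linear cut through $P$ lies in $Z$, although the lemma still holds (e.g.\ $V(y-x^2)$ works). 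So over finite fields one must allow nonlinear $g_i$ with independent linear parts at $P$ (a Poonen-style Bertini over finite fields), or run a norm argument over two auxiliary extensions of coprime degree. For the paper's applications $k$ is a number field or one of its completions, so the infinite case suffices; but as a proof over an arbitrary perfect field, the argument as you wrote it is incomplete at this one point, which you flag but do not resolve.
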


\begin{lem}[{\cite[page 89]{CT-SD}}, {\cite[page 19]{CT-Sk-SD}}]\label{moving 2}
Let $\pi:X\to\mathbb{P}^1$ be a fibration defined over $\mathbb{R},$ $\mathbb{C},$ or a $p$-adic field.
Suppose that $X$ is smooth integral. Let $D$ be a finite set of closed points of $\mathbb{P}^1,$
and $X_0$ be a non-empty Zariski open subset of $X.$

Then for every effective 0-cycle $z\neq0$ supported in $X_0,$ there exists a separable effective 0-cycle $z'$
supported in $X_0$ such that $z'$ is sufficiently close to $z$ and such that
$\pi_*(z')$ is separable and supported outside $D.$ The 0-cycles $\pi_*(z)$ and
$\pi_*(z')$ are rationally equivalent on $\mathbb{P}^1.$
\end{lem}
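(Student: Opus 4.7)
The plan is to work locally in the $K$-analytic category, perturbing $z$ point by point inside the residue fields of its support. Write $z = \sum_i n_i P_i$ as a sum of distinct closed points $P_i$ with multiplicities $n_i$, set $K_i := k(P_i)$, and fix a geometric point $\tilde P_i \in X(K_i)$ above $P_i$. Since each $K_i$ is a finite extension of the local field $k$, it is itself a local field and $X(K_i)$ carries the structure of a $K_i$-analytic manifold. My goal is to choose, for each $i$, exactly $n_i$ distinct $K_i$-points $\tilde P_{i,1}', \ldots, \tilde P_{i,n_i}'$ inside a small $K_i$-analytic neighborhood $V_i$ of $\tilde P_i$ in $X_0(K_i)$ (nonempty since $X_0$ is Zariski open containing $P_i$), such that the resulting image closed points $P_{i,j}'$ satisfy all the required properties; then I set $z' := \sum_{i,j} P_{i,j}'$.

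The core of the argument is a Baire-category/genericity check for the choice of $\tilde P_{i,j}' \in V_i$. Condition (a), $k(P_{i,j}') = K_i$: the locus of $\tilde P' \in V_i$ with strictly smaller residue field is contained in the finite union $\bigcup_{L \subsetneq K_i} X(L) \cap V_i$, a union of lower $k$-dimensional analytic submanifolds with empty interior. Condition (b), the $P_{i,j}'$ are pairwise distinct as closed points: two $K_i$-points define the same closed point iff one is a Galois conjugate of the other, and the conjugates of a fixed point form a finite set, so generic perturbations avoid collision. Condition (c), $k(\pi(P_{i,j}')) = K_i$, equivalent to $\pi_*(P_{i,j}')$ having coefficient $1$: I first move $\tilde P_i$ slightly to assume it is a regular point of $\pi$ (the critical locus is a strict closed subvariety of $X$, hence has empty interior in $X(K_i)$); then $\pi$ is a local submersion at $\tilde P_i$, its image on a neighborhood of $\tilde P_i$ is open in $\mathbb{P}^1(K_i)$, and within this open the $K_i$-points not lying in $\mathbb{P}^1(L)$ for any proper subfield $L \subsetneq K_i$ are dense by Baire. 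Conditions (d) and (e), namely $\pi(P_{i,j}') \notin D$ and the images $\pi(P_{i,j}')$ being pairwise distinct across all indices, are open dense since $D$ and the previously fixed images are finite sets. Combining everything, generic choices inside each $V_i$ work simultaneously, and by shrinking $V_i$ I control the closeness of $\tilde P_{i,j}'$ to $\tilde P_i$ so that $z'$ is sufficiently close to $z$ in the sense defined in the paper.

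The rational equivalence $\pi_*(z) \sim \pi_*(z')$ on $\mathbb{P}^1$ then comes for free: both pushforwards have degree equal to $\deg z$ (pushforward preserves degree, and $\deg z' = \deg z$ by construction), while $\textup{CH}_0(\mathbb{P}^1_k) = \mathbb{Z}$ via the degree map, so two $0$-cycles of the same degree are automatically rationally equivalent. The main obstacle I anticipate is condition (c): if $\tilde P_i$ happens to lie on the critical locus of $\pi$, or more subtly if $\pi(P_i)$ already has residue field strictly smaller than $K_i$, one cannot invoke the open-mapping argument directly. One must first perform an auxiliary move to a nearby regular point with image of maximal residue field, and only then split into the $n_i$ points while preserving the other genericity conditions; checking that all these perturbations can be made small enough to keep $z'$ close to $z$ in the prescribed topological sense is the delicate bookkeeping part of the proof.
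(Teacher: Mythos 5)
Your proposal is correct, and it is essentially the standard argument behind this lemma (which the paper itself does not prove but quotes from the cited references): perturb each point of the support inside $X(K_i)$ using the smoothness of $X$ and the implicit function theorem at a point where $\pi$ is submersive, and obtain all the required conditions by avoiding the finitely many closed, nowhere dense subsets $X(L)$, $\pi^{-1}(\mathbb{P}^1(L))$, the critical locus of $\pi$, the fibres over $D$, and the conjugates of previously chosen points, the rational equivalence on $\mathbb{P}^1$ being automatic since $\CH_0(\mathbb{P}^1)\simeq\mathbb{Z}$ by degree. The bookkeeping you worry about at the end is indeed handled exactly by the sequential genericity choices you describe, so no gap remains.
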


\subsection{Comparison of Brauer groups}
The following proposition was stated originally for rational points $\theta,$ but the whole proof works
for closed points.
\begin{prop}[{\cite[Thm. 3.5.1]{Harari}}, {\cite[Thm. 2.3.1]{Harari2}}]\label{comparison Br}
Let $X\to\mathbb{P}^1$ be a fibration defined over a number field $k.$ Suppose that
$\Pic(X_{\bar{\eta}})$ is torsion-free and $\Br(X_{\bar{\eta}})$ is finite.

Then there exists a generalized Hilbertian subset $\textsf{Hil}\subset\mathbb{P}^1$ such that
for all $\theta\in \textsf{Hil},$ the specialization
$$sp_\theta:\frac{\Br(X_\eta)}{\Br(k(t))}\to\frac{\Br(X_\theta)}{\Br(k(\theta))}$$
is an isomorphism of finite abelian groups.
\end{prop}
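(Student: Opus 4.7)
The plan is to prove Proposition~\ref{comparison Br} by spreading out the fibration, reducing the comparison of $\Br(X_\eta)/\Br(k(t))$ and $\Br(X_\theta)/\Br(k(\theta))$ to comparisons at the level of Galois cohomology of $\mathrm{Pic}$ and (transcendental) Brauer, and then invoking a classical Hilbert irreducibility argument in the guise of a generalised Hilbertian subset.

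First I would choose a non-empty open $U\subset \mathbb{P}^1$ over which $\pi$ is smooth and proper with geometrically integral fibres, so that $X_U \to U$ is a good family. Working with $X_\eta$ over $k(t)$, the low-degree terms of the Hochschild--Serre spectral sequence give a natural filtration of $\Br(X_\eta)$ whose successive quotients are controlled, on the one hand, by $\Br(k(t))$, on the other by $H^1(k(t),\mathrm{Pic}(X_{\bar\eta}))$ via the algebraic part, and finally by $\Br(X_{\bar\eta})^{\Gal(\overline{k(t)}/k(t))}$ via the transcendental part; the hypotheses (gen) guarantee that the latter two groups are finite. The same spectral sequence is available for $X_\theta$ over $k(\theta)$, and the specialisation $sp_\theta$ is induced by compatible maps between these sequences, so the desired isomorphism will follow from isomorphisms on the algebraic and transcendental pieces separately.

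For the transcendental part, I would shrink $U$ so that the constructible étale sheaves $R^i\pi_\ast \mu_n$ are locally constant on $U$ for each relevant $i$ and $n$; this is possible because $\Br(X_{\bar\eta})$ is finite (so only finitely many torsion primes intervene) and by Deligne's finiteness theorem. After this shrinking, the specialisation $\Br(X_{\bar\theta})\to \Br(X_{\bar\eta})$ is an isomorphism of Galois modules for every $\theta\in U$, provided the absolute Galois groups act compatibly. For the algebraic part, since $\mathrm{Pic}(X_{\bar\eta})$ is finitely generated and torsion-free, the Galois action factors through a finite quotient $\Gal(L/k(t))$ for some finite Galois extension $L/k(t)$; spreading $L$ out gives an étale cover of an open subset of $\mathbb{P}^1$, whose irreducibility condition at a closed point $\theta$ is precisely what defines a generalised Hilbertian subset $\textsf{Hil}$. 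For $\theta\in\textsf{Hil}$ one then has a canonical identification of the Galois groups acting on $\mathrm{Pic}(X_{\bar\theta})\cong \mathrm{Pic}(X_{\bar\eta})$, and similarly for $\Br(X_{\bar\theta})\cong \Br(X_{\bar\eta})$.

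Finally, combining these compatibilities with the Hochschild--Serre exact sequences for $X_\eta/k(t)$ and $X_\theta/k(\theta)$, a diagram chase shows that $sp_\theta$ is both injective and surjective, hence an isomorphism of finite abelian groups. The main obstacle I anticipate is the transcendental piece: one must ensure that the specialisation of $\Br(X_{\bar\eta})$ is genuinely an isomorphism (not merely an injection) onto $\Br(X_{\bar\theta})$ for $\theta$ in a Zariski-open subset of $\mathbb{P}^1$, which requires a careful use of proper base change together with the finiteness of $\Br(X_{\bar\eta})$ to avoid any divisible contribution that could obstruct surjectivity. Once this is secured, intersecting the resulting open with the Hilbertian subset produced by the algebraic analysis yields the desired $\textsf{Hil}$.
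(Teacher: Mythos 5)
Your proposal reconstructs, in outline, exactly the strategy of Harari's proof (the cited Th\'eor\`emes 3.5.1 of \cite{Harari} and 2.3.1 of \cite{Harari2}), to which the paper defers; the paper itself only adds the remark that the argument carries over verbatim from rational points $\theta$ to closed points $\theta$. Harari likewise filters $\Br(X_\eta)$ via Hochschild--Serre into an ``algebraic'' piece controlled by $H^1\bigl(k(t),\mathrm{Pic}(X_{\bar\eta})\bigr)$ and a ``transcendental'' piece controlled by $\Br(X_{\bar\eta})$, spreads the family out over an open of $\mathbb{P}^1$ so that the relevant \'etale sheaves are lisse, uses the torsion-freeness of $\mathrm{Pic}(X_{\bar\eta})$ (hence finite generation, since $\mathrm{Pic}^0$ vanishes) and the finiteness of $\Br(X_{\bar\eta})$ to make the outer Galois action factor through a finite quotient, and then takes for $\textsf{Hil}$ the Hilbertian subset attached to the corresponding finite \'etale cover of an open of $\mathbb{P}^1$ so that the Galois modules $\mathrm{Pic}(X_{\bar\theta})$, $\Br(X_{\bar\theta})$ specialize to $\mathrm{Pic}(X_{\bar\eta})$, $\Br(X_{\bar\eta})$ compatibly with the Galois groups. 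Two small points worth tightening: the arrow you write as $\Br(X_{\bar\theta})\to\Br(X_{\bar\eta})$ runs the wrong way for a specialization map (for a lisse sheaf the stalks are of course isomorphic, but one should keep the direction consistent with the cospecialization on cohomology); and when you say the algebraic piece gives the Hilbertian condition, you should make explicit that the \emph{same} finite cover must also trivialize the Galois action on the finite group $\Br(X_{\bar\eta})$, so that the two conditions are combined into a single generalised Hilbertian subset --- you do gesture at this, but it is the place where finiteness of $\Br(X_{\bar\eta})$ is used in an essential way.
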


\subsection{Hilbert's irreducibility theorem}\label{Hilbert}

The following proposition is a variant of Hilbert's irreducibility theorem.
This is a crucial step in the proof of the main theorem.
In order to prove the proposition, we follow the strategy of Ekedahl \cite{Ekedahl},
of which more detailed arguments were given by Harari \cite{Harari}.
We combine the method of Colliot-Th\'el\`ene \cite{CT99} to deal with 0-cycles on higher genus curves.

\begin{prop}\label{Hilbert's irreducibility}
Let $k$ be a number field and  $\textsf{Hil}$ be a generalized Hilbertian subset of $\mathbb{P}^1_{/k}.$
Let $P_i$ $(i=1,\ldots,n)$ be closed points of $\mathbb{A}^1=\mathbb{P}^1\setminus\{\infty\}$ of residue field
$k_i.$

Let $S$ be a finite set of places of $k$ and $z_v$ be an effective separable
0-cycle of degree $d>\sum_{i=1}^n[k_i:k]-2$ on $\mathbb{P}^1_v$ supported in $\mathbb{A}^1\setminus\bigsqcup_{i=1}^n P_i$
for each $v\in S.$

Then, given a finite non-trivial extension $F$ of $k,$ there exists
\begin{itemize}
\item[-] an infinite set $I$ of places of $k,$
\item[-] a closed point $\theta\in\mathbb{A}^1$ of degree $d$ (defined by an irreducible polynomial $f\in k[t]$),
\item[-] a place $w_i\in\Omega_{k_i}\setminus S\otimes_kk_i$  for each $i,$
\end{itemize}
satisfying the following conditions
\begin{itemize}
\item[(1)] each place in $I$ splits completely in $F,$
\item[(2)] $\theta\in \textsf{Hil},$
\item[(3)] as a 0-cycle, $\theta$ is sufficiently close to $z_v$ for all $v\in S,$
\item[(4)] as a $k(\theta)$-point of $\mathbb{A}^1,$ $\theta$ is an $S\cup I$-integer,
\item[(5)] for each $i,$ we have $w_i(f(P_i))=1$ and $w(f(P_i))=0$ for all places
            $w\in\Omega_{k_i}\setminus(S\cup I)\otimes_kk_i$ different from $w_i.$
\end{itemize}
\end{prop}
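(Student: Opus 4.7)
My plan is to parameterize closed points $\theta$ of degree $d$ on $\mathbb{A}^1_k$ by monic polynomials $f(t)=t^d+a_{d-1}t^{d-1}+\cdots+a_0\in k[t]$ (i.e.\ by $k$-points of $\mathbb{A}^d_k$), and to locate a suitable $f$ via Ekedahl's adelic version of Hilbert's irreducibility theorem in the refined form of \cite{Harari} combined with Chebotarev's density theorem. Conditions (2), (3) and parts of (4), (5) become open local conditions on $f$ that can be imposed simultaneously by adelic density; the set $I$ is then constructed a posteriori. The separability of the $z_v$ and the bound $d>\sum_i[k_i:k]-2$ are exploited, following \cite{CT99}, to provide enough flexibility to steer the ideal factorization of the $f(P_i)$.

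For each $v\in S$ I write $z_v=\sum_P P$ and form the monic squarefree polynomial $f_v\in k_v[t]$ of degree $d$ whose roots over $\bar{k}_v$ realize $\mathrm{supp}(z_v)$; condition (3) becomes the open $v$-adic constraint ``$f$ close to $f_v$''. For each $i$ I pre-select, by Chebotarev, an $F$-splitting prime $\mathfrak{p}_i$ of $k$ outside $S$, together with a place $w_i$ of $k_i$ above $\mathfrak{p}_i$, and impose at $\mathfrak{p}_i$ the open non-empty condition ``$w_i(f(P_i))=1$''. I also demand $f\in\mathcal{O}_{k,T}[t]$ for $T:=S\cup\{\mathfrak{p}_1,\ldots,\mathfrak{p}_n\}$. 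The adelic density of Hilbertian subsets of $\mathbb{A}^d$ (Ekedahl--Harari) now produces $f\in\mathcal{O}_{k,T}[t]$ monic irreducible of degree $d$, with $\theta_f\in\textsf{Hil}$, realizing every local constraint. Let $J$ be the finite set of primes of $k$ outside $T$ lying below a place $\mathfrak{q}\neq w_i$ of some $k_i$ with $\mathfrak{q}(f(P_i))>0$, and let $I_0$ be an infinite set of primes of $k$, produced by Chebotarev applied to $k(\theta_f)\cdot F/k$, that split completely in $F$ and are disjoint from $S\cup T\cup J$. Set $I:=\{\mathfrak{p}_1,\ldots,\mathfrak{p}_n\}\cup J\cup I_0$.

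The main obstacle is condition (1): every prime of $I$ must split completely in $F$. The primes in $I_0$ split by construction, and each $\mathfrak{p}_i$ by its prior choice, but the ``accidental'' primes in $J$---those at which $P_i$ happens to reduce to an unintended root of $f$---are a priori arbitrary. This is where the bound $d>\sum_i[k_i:k]-2$ is decisive: it guarantees, via Riemann--Roch on $\mathbb{P}^1$, that the affine-linear evaluation map $k^d\to\prod_i k_i$, $(a_j)_{j<d}\mapsto(f(P_i))_i$, has image of codimension at most one. Consequently the values $f(P_i)\in k_i^{\times}$ can be prescribed jointly (up to one global scalar relation) to lie in any preassigned coset modulo the multiplicative subgroup of $\prod_i k_i^{\times}$ generated by $S$-units, by a uniformizer at $w_i$, and by elements whose supports lie above $F$-splitting primes of $k$. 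These finitely many additional open constraints fold back into the Ekedahl--Harari step and force $J$ to consist only of $F$-splitting primes (or be empty). Conditions (1)--(5) then hold by construction.
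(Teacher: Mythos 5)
Your structure is close to the paper's, but there is a genuine gap exactly where you flag the ``main obstacle.'' The condition that the accidental set $J$ consist only of $F$-splitting primes is a constraint on the \emph{full prime factorisation} of each value $f(P_i)\in k_i^{\times}$ --- a global arithmetic property, not an open adelic condition at a fixed finite set of places. The Ekedahl--Harari theorem gives density of $\textsf{Hil}$ in the $S'$-adelic topology for a fixed finite $S'$: it lets you prescribe finitely many open local conditions on the coefficients of $f$ plus Frobenius conditions at finitely many auxiliary places, but once $f$ has been produced its coefficients are fixed and the primes dividing $f(P_i)$ away from those places are whatever they happen to be. The multiplicative subgroup you invoke (generated by $S$-units, a uniformiser at $w_i$, and elements supported above $F$-splitting primes) has infinite index in $k_i^{\times}$ when $F/k$ is non-trivial, and membership in a coset of it is not expressible as finitely many open local conditions, so it cannot ``fold back into the Ekedahl--Harari step.'' Surjectivity of the Riemann--Roch evaluation map is a purely $k$-linear fact; it does not give simultaneous control of Hilbertian membership, adelic approximation, and a non-open multiplicative condition on the target values.

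The paper avoids this issue by reversing the order of operations. First, $I$ is fixed a priori as the infinite, positive-density set of \emph{all} places of $k$ that split completely in $F\cdot k'$ (Chebotarev). Second, each value $\rho_i=f(P_i)\in k_i^{\times}$ is constructed \emph{before} $f$ itself, by the generalised Dirichlet theorem of Sansuc (\cite[Cor.~4.4]{Sansuc82}): this produces $\rho_i$ close to the target values at the $S$-places and at the auxiliary Chebotarev places, with $w_i(\rho_i)=1$ at one new place $w_i$, and a unit at every other place outside $(S\cup I)\otimes_k k_i$. Controlling the factorisation of $\rho_i$ at almost all places simultaneously is precisely what Dirichlet delivers and Ekedahl--Harari does not. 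The polynomial $f$ is then assembled as $f_0+\sum_i\sigma_i(\rho_i)$ with $f_0\in\ker(ev)$ chosen by strong approximation, and irreducibility plus $\theta\in\textsf{Hil}$ follow from Krasner's lemma at an auxiliary place $v_0$ together with the geometric Chebotarev/Frobenius argument at the places $v_c$. That Dirichlet-first step is the idea your proposal is missing.
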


\begin{proof}
The generalized Hilbertian subset $\textsf{Hil}\subset U\subset\mathbb{P}^1$
is defined by a finite morphism $Z\to\mathbb{P}^1$ \'etale over $U,$
where $Z$ an integral $k$-variety. Take $K'$ to be the Galois closure of the extension
$k(Z)/k(\mathbb{P}^1)$ of function fields. Let $Z'$ be the normal curve having $K'$ as its function field,
then the composite of finite morphisms $Z'\to Z\to\mathbb{P}^1$ defines a generalized
Hilbertian $\textsf{Hil}'$ contained in $\textsf{Hil}.$ We are going to find a closed point $\theta\in \textsf{Hil}'.$

Let $k'$ be the algebraic closure of $k$ in $K'.$ By shrinking $U$ if necessary, we may assume that
$U'=Z'\times_{\mathbb{P}^1}U$ is \'etale surjective over $U,$ the cover $U'\to U$ is Galois of group $G=\Gal(K'/k(t)),$
the open $U$ is contained in $\mathbb{A}^1,$ and moreover the $k$-morphism $U'\to U$ factorizes through
$U_{k'}\to U$ by a $k'$-morphism, \emph{cf.} \cite[Lem. 1.3]{Liang3}. Then $U'\to U_{k'}$ is a Galois
cover of group $H=\Gal(K'/k'(t)).$
By enlarging $S$ if necessary, we may assume that all these extend to smooth integral models
$\mathcal{U}'\to\mathcal{U}_{O_{k',S'}}\to\mathcal{U}$ over $O_{k,S}.$
Here and from now on, we write $S'=S\otimes_kk'$ and
$\mathcal{V}=\mathcal{U}_{O_{k',S'}}=\mathcal{U}\times_{O_{k,S}}O_{k',S'}.$

Given a finite non-trivial extension $F/k,$ take $I\subset\Omega_k$ to be the set of places of $k$ that split
completely in the compositum $F\cdot k',$ according to Chebotarev's density theorem $I$ is infinite.

Let $\textbf{E}$ be the finite set of conjugation classes of $H=\Gal(\mathcal{U}'/\mathcal{V}).$
Since $U'$ is geometrically integral over $k',$ the geometric Chebotarev's density theorem
(\emph{cf.} \cite[Lem. 1.2]{Ekedahl}) allow us to construct an injection
$$\gamma:\textbf{E}\To((\Omega_k\setminus S)\cap I)\otimes_kk'$$
such that for every $c\in\textbf{E}$ there exists a point of finite residue field
$\bar{x}_c\in\mathcal{V}(k'(\gamma(c)))$ with associated Frobenius element
$Frob_{\bar{x}_c}$ belonging to the class $c.$
Moreover, we can require that after restricting to $k$ the images of different classes $c\in\textbf{E}$
are different from each other.

For each $v\in S,$ the effective 0-cycle $z_v$ is defined by a separable polynomial
$f_v\in k_v[t],$ in other words $div_{\mathbb{P}^1_v}(f_v)=z_v-d\infty.$

We denote $v_c$ the place of $k$ below $w'=\gamma(c),$ as $v_c$ belongs to $I$ we have $k_{v_c}=k'_{w'}$ and $k(v_c)=k'(w').$
By Hensel's lemma, the point $\bar{x}_c$ lifts to a point
$x_c\in\mathcal{V}(O_{w'})\subset U_{k'}(k'_{w'})=U(k_{v_c})$
where $O_{w'}$ is the ring of integers of the local field $k'_{w'}.$
According to the lemma \ref{closed point} below, there exists a closed point $x'_c$ of
$U$ of degree $d-1$ different from $x_c$ and from the $P_i$'s.
We write $z_{v_c}=x_c+x'_c$ and $div_{\mathbb{P}^1_{v_c}}(f_{v_c})=z_{v_c}-d\infty$ for a polynomial $f_{v_c}\in k_{v_c}[t].$

Similarly, we take a place $v_0\in I\setminus S$ away from the $v_c$'s and we write
$div_{\mathbb{P}^1_{v_0}}(f_{v_0})=z_{v_0}-d\infty$ with a closed point $z_{v_0}\in U$ different from the $P_i$'s and
an irreducible polynomial $f_{v_0}\in k_{v_0}[t]$ of degree $d.$

We consider the following exact sequence
$$0\To\mathcal{O}_{\mathbb{P}^1}(d\infty-\sum_{i=1}^nP_i)\To\mathcal{O}_{\mathbb{P}^1}(d\infty)\To\bigoplus_{i=1}^nP_i\to0$$
where by abuse of notation $P_i$ denotes the skyscraper sheaf on $\mathbb{P}^1$ supported at $P_i.$
As $d>\sum_{i=1}^ndeg(P_i)-2,$ we have $H^1(\mathbb{P}^1,\mathcal{O}_{\mathbb{P}^1}(d\infty-\sum_{i=1}^nP_i))=0$
by Serre's duality and hence an exact sequence of global sections
$$0\To\Gamma(\mathbb{P}^1,\mathcal{O}_{\mathbb{P}^1}(d\infty-\sum_{i=1}^nP_i))\To\Gamma(\mathbb{P}^1,\mathcal{O}_{\mathbb{P}^1}(d\infty))\buildrel{ev}\over\To\bigoplus_{i=1}^nk_i\to0.$$
Here the $k$-linear map $ev$ is the evaluation of a polynomial (of degree $d$) at points $P_i.$
We fix a $k$-linear section $\sigma=\bigoplus_{i=1}^n\sigma_i$ of $ev.$
By enlarging $S$ if necessary, we may also assume that entries of the matrix of the linear map
$\sigma_i$ are all $S$-integers.

For $v\in S\cup\{v_c\}_{c\in\textbf{E}}\cup\{v_0\},$ the polynomial $f_v$ is written in a unique way as
$$f_v=f_{0v}+\sum_i\sigma_i(f_v(P_i))$$
with $f_{0v}\in\Gamma(\mathbb{P}^1,\mathcal{O}_{\mathbb{P}^1}(d\infty-\sum_{i=1}^nP_i))\otimes_kk_v$ a polynomial
of degree $d$ such that $f_{0v}(P_i)=0$ for all $i.$

\emph{A priori} $\rho_{i,v}=f_v(P_i)$ lies in $k_i\otimes_kk_v,$
we have $\rho_{i,v}\in (k_i\otimes_kk_v)^*$ since $P_i\notin supp(z_v).$
Note that $I\setminus (S\cup\{v_c\}_{c\in\textbf{E}}\cup\{v_0\})$ is infinite.
Thanks to a generalized Dirichlet's theorem (\emph{cf.} \cite[Cor. 4.4]{Sansuc82}),
for each $i$ there exists
an element $\rho_i\in k^*_i$ and a place $w_i\in\Omega_{k_i}$ outside $S$ such that
\begin{itemize}
\item[-] $\rho_i$ is sufficiently close to $\rho_{i,v}$ for all $v\in S\cup\{v_c\}_{c\in\textbf{E}}\cup\{v_0\},$
\item[-] $w_i(\rho_i)=1,$
\item[-] $\rho_i$ is a unit outside $\{w_i\}\cup(S\cup I)\otimes_kk_i.$
\end{itemize}

It suffices to find a closed point $\theta\in\mathbb{A}^1$ defined by $f\in k[t]$
having the desired properties (2)(3)(4) and such that $f(P_i)=\rho_i$ for all $i.$

Strong approximation property applied to the finite dimensional $k$-linear space
$\Gamma(\mathbb{P}^1,\mathcal{O}_{\mathbb{P}^1}(d\infty-\sum_{i=1}^nP_i))$
give us a polynomial $f_0$ of degree $d$ with coefficients in $O_{S\cup I}$ such that
$f_0(P_i)=0$ for all $i$ and such that $f_0$ is sufficiently close to $f_{0v}$ for all
$v\in S\cup\{v_c\}_{c\in\textbf{E}}\cup\{v_0\}.$
Then the degree $d$ polynomial $f=f_0+\sum_i\sigma_i(\rho_i)\in k[t]$ is sufficiently close to
$f_v$ with $S\cup I$-integral coefficients.
As $f_{v_0}$ is irreducible, Krasner's lemma implies that $f$ is irreducible over
$k_{v_0}$ and \emph{a fortiori} irreducible over $k.$ Moreover $v_0$ splits completely in $k',$
the field $k'$ is contained in $k_{v_0},$ thus $f$ is irreducible over $k'.$
We write $div_{\mathbb{P}^1}(f)=\theta-d\infty,$
then the 0-cycle $\theta$ is actually a closed point of $U,$ and
its preimage $\theta'$ in $U_{k'}$ is also a closed point.
Let $L=k(\theta)$ be the residue field of $\theta,$ then $L'=L\otimes_kk'$ is a field and is the residue field
of $\theta'.$
As a 0-cycle $\theta$ is sufficiently close to
$z_{v_c}=x_c+x'_c$ for all $c\in\textbf{E}.$ This means that there exists a place $w$ of
$L=k(\theta)$ above $v_c$ such that $L_w/k_{v_c}$ is a trivial extension and moreover
the image $\theta_w$ of $\theta$ under $U(L)\to U(L_w)$ is sufficiently close to $x_c.$
Hence $\theta$ is also integral (for the model $\mathcal{U}$) at $w,$ and it reduction
modulo $w$ is $\bar{x}_c\in\mathcal{U}(L(w))=\mathcal{U}(k(v_c)).$

Recall that $w'=\gamma(c)\in \Omega_{k'}$ is a place over $v_c\in\Omega_k$ such that $k'_{w'}/k_{v_c}$
is a trivial extension. Let $\lambda$ be a place of $L'$ above $w\in\Omega_L$ and above $w'\in\Omega_{k'}.$
The point $\theta'\in U_{k'}(L'_\lambda)$ is actually an integral point (for the model $\mathcal{V}$)
of reduction modulo $\lambda$ exactly $\bar{x}_c\in\mathcal{V}(L'(\lambda))=\mathcal{V}(k'(w')).$
Consider the map $\Gal(\bar{L}'/L')\to H=\Gal(U'/U_{k'})$ defined by a chosen lifting of
$Spec(\bar{L}')\to Spec(L')\buildrel{\theta'}\over\To U_{k'}$ to $U',$
the fact that the reduction of $\theta'$ is $\bar{x}_c$ means that the conjugate class $c\in\textbf{E}$ of the
Frobenius element $Frob_{\bar{x}_c}\in H$ intersects with the image of $\Gal(\bar{L}'/L').$
This holds for all $c\in\textbf{E},$ then $\Gal(\bar{L}'/L')$ maps surjectively to $H$
by a well known result of finite group theory, \emph{cf.} \cite[Lem. 1.1]{Ekedahl}.
The preimage of $\theta'$ under $U'\to U_{k'}$ is then connected by the theory of \'etale fundamental groups.
The point $\theta\in U$ has connected preimage in $U'$ and so $\theta\in \textsf{Hil}'\subset\textsf{Hil}.$
And it is an $S\cup I$-integer since $f$ has coefficients in $O_{S\cup I}.$
\end{proof}

\begin{lem}\label{closed point}
Let $k$ be a $p$-adic local field and $D$ a finite set of closed points of $\mathbb{P}^1_{/k}.$
Then for every positive integer $n,$ there exists a closed point of $\mathbb{P}^1\setminus D$
of degree $n.$
\end{lem}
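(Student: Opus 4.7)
The plan is to reduce the statement to the existence of primitive elements in a well-chosen extension. Since $k$ is a $p$-adic local field, it admits (unramified) extensions of every degree; so fix once and for all a degree $n$ extension $L/k$ inside an algebraic closure $\bar{k}$. Closed points of $\mathbb{A}^1_k$ whose residue field is contained in $L$ correspond bijectively to $\Gal(\bar{k}/k)$-orbits in $L$, and the orbits of size exactly $n$ (equivalently, the closed points of degree $n$) correspond to the primitive elements of $L$, i.e.\ those $\alpha\in L$ with $k(\alpha)=L$.

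Next I would show that the set of primitive elements of $L$ is infinite. The set of non-primitive elements is the union of the proper $k$-subfields of $L$; because $L/k$ is finite, there are only finitely many such subfields, and each is a proper $k$-linear subspace of $L$. Since $k$ is infinite, a finite-dimensional $k$-vector space cannot be written as a finite union of proper $k$-subspaces, so the set of primitive elements of $L$ is a nonempty Zariski-open subset of $L$, and in particular is infinite.

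Finally, let $\bar{D}\subset\bar{k}\cup\{\infty\}$ denote the finite set of geometric points lying above the closed points of $D$. Since the set of primitive elements of $L$ is infinite while $\bar{D}$ is finite, we can pick a primitive element $\alpha\in L$ avoiding $\bar{D}$. The Galois orbit of $\alpha$ is then a closed point of $\mathbb{P}^1_k\setminus D$ of degree exactly $n$, completing the proof.

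There is essentially no obstacle: the argument is purely a packaging of the two facts that $p$-adic fields have extensions of every degree and that an infinite-dimensional-over-primitive-subspaces vector space is not a finite union of proper subspaces. The only care needed is to separate ``degree of residue field'' from ``geometric coincidence with $D$'', which the translation $\alpha\mapsto\alpha+a$ for $a\in k$ would also handle if one preferred a more constructive avoidance of $D$.
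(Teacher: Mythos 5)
Your argument is correct, and it is essentially a fleshed-out version of the paper's one-sentence proof, which simply asserts that there are infinitely many irreducible polynomials of a fixed degree $n$ over a local field. Two micro-remarks: your phrase ``$\Gal(\bar{k}/k)$-orbits in $L$'' is literally accurate only when $L/k$ is Galois (for non-Galois $L$, the orbit of $\alpha\in L$ can leave $L$), but this is harmless since primitive elements still give degree-$n$ closed points, and in any case taking $L$ unramified (hence cyclic) sidesteps the issue; and the finiteness of the set of intermediate fields, which you invoke implicitly, relies on separability of $L/k$, automatic here in characteristic $0$.
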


\begin{proof}
Over a local field, there exists infinite many irreducible polynomials of a fixed degree $n.$
\end{proof}

\subsection{Existence of local points}

\begin{lem}[{\cite[Lem. 1.2]{CT-Sk-SD}}]\label{LangWeil}
Let $k$ be a number field and $Spec(O)$ a non-empty open set of the ring of integers of $k.$
Let $\Pi:\mathcal{X}\to\mathbb{P}^1_O$ be a flat, projective morphism with $\mathcal{X}$ regular
and smooth over $O.$ Let $\pi:X\to\mathbb{P}^1$ be the restriction of $\Pi$ over $Spec(k).$
Let $T\subset\mathbb{P}^1_O$ be a closed subset, finite and \'etale over $O,$ such that
fibres of $\Pi$ above points not in $T$ are split. Let $T=\bigcup_{i=1}^nT_i$ be the decomposition
of $T$ into irreducible closed subsets, and lest $k_i$ be the field of fractions of $T_i.$

After inverting finitely many primes in $O,$ the following holds.

(a) Given any closed point $u\in\mathbb{P}^1_O,$ if the fibre $\mathcal{X}_u$ over the finite field
$k(u)$ is split, then it contains a smooth $k(u)$-point.

(b) Given any closed point $\theta\in\mathbb{P}^1,$ with Zariski closure
$Spec(\tilde{O})\simeq\tilde{\theta}\subset\mathbb{P}^1_O,$ where $\tilde{O}/O$ is finite
with $Frac(\tilde{O})=k(\theta),$ we denote by $\tilde{O}'$ the integral closure of $\tilde{O}$ in
$k(\theta).$ If $u\in\tilde{\theta}$ is a closed point such that $\mathcal{X}_u/k(u)$
is split, then $X_\theta$ contains a smooth $k(\theta)_v$-point where $v$ is a place of $k(\theta)$
(associated to a closed point of $Spec(\tilde{O}')$) above $u.$

(c) Let $u$ belong to one of the $T_i$'s, thus defining a place $v_i$ of $k_i.$ Assume that there exists
a irreducible component $Z$ of the fibre of $\Pi$ at $P_i=T_i\times_Ok$ which has multiplicity one.
Let $k'_i$ denote the algebraic closure of $k_i$ in the function field of $Z.$ If
the place $v_i$ splits completely in the ring of integers of $k'_i,$ then $\mathcal{X}_u/k(u)$ is
split.

(d) Assume that for each $i$ there exists at least one irreducible component of $\Pi^{-1}(P_i)$
which has multiplicity one. Then given any finite extension $F_0/k$ there exists a finite extension
$F$ containing $F_0$ such that for each place $v\in\Omega_k$ splits completely in $F$ the induced
map $X(L)\to\mathbb{P}^1(L)$ is surjective for all finite extensions $L/k_v.$
\end{lem}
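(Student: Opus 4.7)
The plan is to prove the four parts in order, each building on the previous. The unifying tools will be Lang--Weil estimates (to produce smooth points on fibres over finite residue fields), Hensel's lemma (to lift them to points over complete local rings), and Chebotarev's density theorem (to control the splitting of primes in the fields $k'_i$). First I would shrink $O$ so that: $\mathcal{X}$ is smooth over $O$ outside $\Pi^{-1}(T)$; every geometric irreducible component of every geometric fibre over a point of $T$ is already defined over an unramified extension; and all residue fields are large enough for the Lang--Weil bound to produce smooth points on the geometrically integral varieties of the relevant dimensions.

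For part (a), since $\mathcal{X}_u$ is split it contains a geometrically integral component $Z$ of multiplicity one over $k(u)$, and the Lang--Weil estimate yields a smooth $k(u)$-point on $Z$, hence on $\mathcal{X}_u$. For part (b), the closed point $u\in\tilde{\theta}$ corresponds via the integral closure $\tilde{O}'$ to a place $v$ of $k(\theta)$; since $\mathcal{X}$ is smooth over $O$ in a neighbourhood of $\mathcal{X}_u$, the smooth $k(u)$-point produced by (a) lifts by Hensel's lemma to a smooth $k(\theta)_v$-point of $X_\theta$.

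Part (c) I would prove by unwinding the Galois-theoretic meaning of ``split''. The absolute Galois group $\Gal(\bar{k}_i/k_i)$ acts on the geometric irreducible components of $\Pi^{-1}(P_i)$, and the stabiliser of the multiplicity-one component $Z$ is $\Gal(\bar{k}_i/k'_i)$ by the very definition of $k'_i$. If $v_i$ splits completely in the ring of integers of $k'_i$, the Frobenius at $v_i$ lies in this stabiliser, so the reduction of $Z$ modulo $v_i$ is a geometrically integral multiplicity-one component defined over the residue field $k(u)$, making $\mathcal{X}_u$ split.

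Finally, for part (d), I would apply Chebotarev's density theorem to a Galois closure $F$ of the compositum of $F_0$ with all the $k'_i$: any place of $k$ splitting completely in $F$ induces, for each $i$, places of $k_i$ that split completely in $k'_i$. By (c), the fibres above points of $T_i$ lying over such a place are split. For any finite extension $L/k_v$ and any $\theta\in\mathbb{P}^1(L)$, the reduction of $\theta$ is either a point outside $T$ (where the fibre is split by hypothesis) or a point of some $T_i$ (where the fibre is split by the preceding argument). In either case (a) produces a smooth point on the reduced fibre, which by (b) lifts to an $L$-point of $X_\theta$, establishing the required surjectivity. The main obstacle will be the bookkeeping in (d), coordinating the Galois-theoretic description of fibre splitting from (c) with the Hensel lifting of (b), and choosing $F$ so that a single splitting condition simultaneously triggers both mechanisms uniformly in the choice of $\theta$.
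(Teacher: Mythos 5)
Your sketch reproduces, in the correct order, the standard argument from Colliot-Th\'el\`ene--Skorobogatov--Swinnerton-Dyer to which the paper's proof simply defers (the paper offers only a citation to \cite[Lem.~1.2]{CT-Sk-SD} with a remark that the same argument applies to closed points): Lang--Weil for points on split fibres over finite residue fields, Hensel's lemma to lift them, the Frobenius description of splitness governed by complete splitting of $v_i$ in $k'_i$, and the compositum $F_0 \cdot k'_1 \cdots k'_n$ (Galois-closed) for part (d). This is the same approach, and the proposal is essentially correct.
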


\begin{proof}
This is a version for closed points of \cite[Lem. 1.2]{CT-Sk-SD}, as indicated in \cite[page 20]{CT-Sk-SD}
the same proof works for this case. The last conclusion is slightly different from the original one,
but the original proof applies.

We remark that by taking a certain non-empty open subset $Y$ of $X$ and extending it to an integral model over $O,$
the same argument as in the proof (Lang-Weil's estimation and Hensel's lemma) implies that
we can moreover require that the rational points over local fields that obtained are situated inside $Y$ and are
the liftings (by Hensel's lemma) of rational points of finite residue fields of the integral model.
We will use this more precise statement in the proof of the main theorem.
\end{proof}

\section{Proof of the main theorem}\label{proof}

The whole section is devoted to proving the main theorem \ref{main_thm}.

\begin{proof}[Proof of Theorem \ref{main_thm}]
We are going to give a complete proof of the conclusion (2) with assumption (2) concerning 0-cycles of degree $1,$ similar
argument works with assumption (2) concerning rational points with the help of \cite[Lem. 1.8]{Wittenberg}.
The same method proves (1).
Note that if $\CH_0(X_v)\to \CH_0(\mathbb{P}^1_v)$ is assumed injective for almost all places, the first conclusion
in (3) is reduced to the conclusion (2), and the exactness of $(\E)$ is deduced by \cite[Prop. 3.1]{Wittenberg}
applied with the base curve the projective line.

\bigskip
\noindent\emph{Outline of the proof.}

After some preparations of \emph{Steps} \ref{construction E}-\ref{integral models}, in \emph{Step}
\ref{application-formal-moving lemma} we start from $\{z_v\}\bot \Br(X).$ By Harari's formal lemma, we
reduce the pairing to a finite sum over $S_2$ containing the places in $S$ over which we need to approximate the local 0-cycles.
We argue with moving lemmas to reduce to separable effective 0-cycles without changing the Brauer\textendash Manin pairing.
In \emph{Step} \ref{application hilbert irred} we apply Hilbert's irreducibility theorem (Prop. \ref{Hilbert's irreducibility})
to find a closed point $\theta\in\mathbb{P}^1$ close enough to the projections of the separable effective local 0-cycles.
In \emph{Step} \ref{local points}, we verify that $X_\theta$ has points $M_w$ locally everywhere.
For the places in $S_2,$ we apply the implicit function theorem.
For the places outside $S_2,$ We make use of the orthogonality given by
the vertical elements $A_{i,j}$ in the Brauer group, these local points are essentially given
by Hensel's lemma and Lang-Weil's estimation (Lem. \ref{LangWeil}).
In \emph{Step} \ref{orthogonal to Br}, we compute each local term of the Brauer\textendash Manin pairing and we
modify certain $M_w$ (by applying the geometric Chebotarev density theorem)
in order to obtain, on the fibre $X_\theta,$ the orthogonality to the Brauer group.
The modified $M_w$'s are above carefully chosen places $v_r$ in \emph{Step} \ref{construction of assisting places}.
The finite set $E$ constructed in \emph{Step} \ref{construction E} is used to verify one of the hypotheses of Chebotarev's
theorem.

\bigskip
\noindent\emph{Aim.}

In order to prove (2), we fix an integer $\delta,$ a positive integer $N,$ and a finite set $S$ of places of $k.$
Given a family of local 0-cycle $\{z_v\}_{\Omega_k}$ of degree $\delta$ orthogonal to $\Br(X),$ we need to
find a global 0-cycle $z$ such that $z$ and $z_v$ have the same image in $\CH_0(X_v)/N$ for all $v\in S.$

\bigskip

\noindent\emph{Notation.}

We preserve the notation $Y,$ $Z,$ $X_r,$ and $\Lambda,$ at the beginning of \S \ref{main_result},
the complement of $Z=\bigsqcup_{r=1}^lX_r$ is an open $Y\subset X$ with $\Lambda\subset \Br(Y)$ generating
$\Br(X_\eta)/\Br(k(t)).$
Let $P_i$ $(i=1,\ldots,n)$ be the closed points of $\mathbb{P}^1$ such that the fibre
$X_i=X_{P_i}$ is not split. Then for all $i,$ the point $P_i$ is contained in the open subset $V=\pi(Y)$
by the hypothesis $(\Br)$.
Let $U$ be a non-empty open subset of $\mathbb{P}^1$ such that all fibres of $\pi$ over $U$ is smooth and geometrically
integral.
Chose a $k$-point $\infty$ in $U\cap V$ different from those $m_r$'s. The open $V_0=V\setminus\{\infty\}$
is contained in $\mathbb{A}^1=\mathbb{P}^1\setminus\{\infty\}.$
The fibre $X_\infty$ is a smooth $k$-variety of function field $K_\infty=k(X_\infty).$
We write $Y_0=Y\setminus X_\infty=\pi^{-1}(V_0).$

For each $1\leqslant r\leqslant l,$ the fibre $X_r$ at the closed point $m_r\in\mathbb{A}^1$ is split.
We fix an irreducible component $X_r^{irr}$ of $X_r$ of multiplicity $1$ such that it is geometrically
integral over $k_r=k(m_r).$ The point $m_r$ is defined by a monic irreducible polynomial $Q_r(t)\in k[t]$
and its residue field $k_r=k[t]/(Q_r(t)).$ Denote by $K_r=k_r(X_r^{irr})$ the function field of
$X_r^{irr}.$ Let $A$ be an element of $\Br(k(X)),$ we denote by $\partial_{A,r}\in H^1(K_r,\mathbb{Q}/\mathbb{Z})$
its residue at the generic point of $X_r^{irr}.$ The subgroup of $H^1(K_r,\mathbb{Q}/\mathbb{Z})$ generated
by the elements $\partial_{A,r}$ $(A\in\Lambda)$ is of the form $G_r=H^1(\Gal(K'_r/K_r),\mathbb{Q}/\mathbb{Z})$
where $K'_r$ is a finite abelian extension of $K_r.$ Let $k'_r$ be the algebraic closure of $k_r$
in $K'_r.$ Since $k_r$ is algebraically closed in $K_r,$ the subgroup
$G'_r=H^1(\Gal(K_rk'_r/K_r),\mathbb{Q}/\mathbb{Z})$ of $G_r$ is isomorphic
to $H^1(\Gal(k'_r/k_r),\mathbb{Q}/\mathbb{Z}).$ Let $Y_r$ be a smooth open subset of $X_r^{irr}$ with empty
intersection with other irreducible components of $X_r.$ By shrinking $Y_r$ if necessary, we may
assume that there exists a finite \'etale connected Galois cover $W_r$ of $Y_r$ of Galois group
$\Gal(K'_r/K_r)$ such that $W_r\to Y_r$ factorizes through $Y'_r=Y_r\times_{k_r}k'_r\to Y_r.$
Then $W_r$ is a variety geometrically integral over $k'_r.$
Moreover, we may also assume that the elements of
$H^1(\Gal(K'_r/K_r),\mathbb{Q}/\mathbb{Z})\subset H^1(K_r,\mathbb{Q}/\mathbb{Z})$ come from
elements of $H^1_{\scriptsize{\mbox{\'et}}}(Y_r,\mathbb{Q}/\mathbb{Z}).$

We also denote by $\partial_{A,\infty}\in H^1(K_\infty,\mathbb{Q}/\mathbb{Z})$ the residue of $A\in \Br(k(X))$ at the generic point of $X_{\infty}.$
Similarly we have the notation $K'_\infty,$ $G_\infty,$ $k'_\infty,$ $Y_{\infty},$ $W_{\infty}$ and $Y'_{\infty}.$

For each $1\leqslant i\leqslant n,$ let $k_i$ be the residue field of $P_i.$ The point $P_i$ gives
a rational point $e_i\in\mathbb{A}^1(k_i)=k_i.$ We set $g'_i=t-e_i\in k_i[t]$ and $g_i=N_{k_i(t)/k(t)}(g'_i)\in k[t],$
then $k_i=k[t]/(g_i).$ We fix an irreducible component $X_i^{irr}$ of the fibre
$X_i$ at $P_i$ of multiplicity $1$ which splits after an abelian extension of $k_i.$
Denote by $K_i=k_i(X_i^{irr})$ the function field of $X_i^{irr}$ and by $k'_i$ the algebraic
closure of $k_i$ in $K_i.$ The field $k'_i$ is a finite abelian extension of $k_i,$
it is a compositum of finitely many cyclic extension $k_{i,j}$ of $k_i.$
By fixing a character $\chi$ of the cyclic group $\Gal(k_{i,j}(t)/k_i(t)),$ we define
$(k_{i,j}/k_i,g'_i)=(\chi,g'_i)$ an element of $\Br(k_i(t))$ via cup product, and we set
$A_{i,j}=cores_{k_i(t)/k(t)}(k_{i,j}/k_i,g'_i)\in \Br(k(t)),$ \emph{cf.} \cite[\S 1]{CT-SD}.
There exists a closed subset $D\subset \mathbb{P}^1$ containing all $P_i$'s such that
$A_{i,j}\in \Br(\mathbb{P}^1\setminus D)$ for all $i,j.$

Replacing by a smaller generalized Hilbertian subset if necessary, according to Proposition \ref{comparison Br},
we may assume that for every closed point $\theta\in \textsf{Hil}\subset\mathbb{A}^1$ the specialization
to the smooth fibre $X_\theta$
$$sp_\theta:\frac{\Br(X_\eta)}{\Br(k(t))}\To\frac{\Br(X_\theta)}{\Br(k(\theta))}$$
is an isomorphism, hence $\Lambda$ maps onto the Brauer group of the fibre $X_\theta$
(up to constant elements which do not
contribute to the Brauer\textendash Manin obstruction on the fibre $X_\theta$).

\medskip
\begin{step}\label{construction E} \emph{Construction of a finite set $E$ of elements in $\Br(k(t)).$}

The finite set $E$ is constructed in order to verify (in \emph{Step} \ref{computation for w r})
one of the hypotheses of the geometric Chebotarev's density theorem applied in \emph{Step} \ref{modification}.

Consider Faddeev's exact sequence (\emph{cf.} \cite[Cor. 6.4.6]{GilleSzamuely})
$$0\To \Br(k)\To \Br(k(t))\buildrel{\partial_\theta}\over\To\bigoplus_\theta H^1(k(\theta),\mathbb{Q}/\mathbb{Z})\To0$$
where the direct sum is taken over all closed points $\theta$ of $\mathbb{A}^1$ and
$\partial_\theta$ is the residue map at $\theta.$
For each $1\leqslant r\leqslant l,$ we consider the subgroup
$$E'_r=\{\beta\in \Br(k(t));\partial_\theta(\beta)=0\mbox{ if }\theta\neq m_r\mbox{ and }\partial_{m_r}(\beta)\in G'_r=H^1(\Gal(k'_r/k_r),\mathbb{Q}/\mathbb{Z})\}.$$
As $E'_r/\Br(k)$ is a finite group, we take a finite subset $E_r\subset \Br(k(t))$ of its representatives,
then we define $E$ to be the (disjoint) union of $E_r$'s.
Thus $E\subset \Br(V_0)$ and its image in $\Br(k(X))$ is contained in
$\Br(Y_0).$ We often identify $E$ with its image in $\Br(k(X)).$
\end{step}

\medskip
\begin{step}\label{construction F0} \emph{Construction of a finite extension $F_0/k.$}

By Tsen's theorem, the Brauer group of $\bar{k}(t)$ is trivial. Therefore there exists a finite extension
$k'$ of $k$ such that the restriction of $E$ in $\Br(k'(t))$ is $0.$
We fix a finite extension $F_0$ of $k$ containing $k',$ $k'_\infty,$ $k'_i$ for all $i,$ and $k_r$ for all $r.$
This is for the future computation of the Brauer\textendash Manin pairing in \emph{Step} \ref{computation for w infty}.
\end{step}

\medskip
\begin{step}\label{integral models} \emph{Extension to integral models.}

There exists a finite set $S_1$ of places of $k$ containing $S$
such that all the following statements hold.
\begin{itemize}
\item[-] $S_1$ contains all archimedean places of $k$.
\item[-] The variety $X$ extends to an integral model $\mathcal{X}$ over $O_{k,S_1},$
        the morphism $\pi$ extends to a flat morphism $\Pi:\mathcal{X}\to\mathbb{P}^1_{O_{k,S_1}}.$
\item[-] For each $1\leqslant i\leqslant n,$ the polynomial $g_i$ has coefficients in $O_{k,S_1}.$
        The Zariski closure $T_i$ of $P_i$ in $\mathbb{P}^1_{O_{k,S_1}}$
         (it is also the closure of the closed subvariety of $\mathbb{A}^1_{O_{k,S_1}}$ defined by $g_i=0$)
         is \'etale over $Spec(O_{k,S_1}).$
         Non-split fibres of $\Pi$ are situated above $T=\bigcup T_i.$ Moreover $T_i$ and $T_{i'}$ are disjoint if $i\neq i'.$
\item[-] For each $1\leqslant r\leqslant l,$ we denote by $\widetilde{m}_r$ the Zariski closure of $m_r$
        in $\mathbb{P}^1_{O_{k,S_1}}$ and
        by $\mathcal{X}_r$ the fibre $\Pi^{-1}(\widetilde{m}_r).$
        The polynomial $Q_r(t)$ defining $m_r$ has coefficients in $O_{k,S_1}.$
        The schemes $\widetilde{m}_r$ and $\widetilde{m}_{r'}$
        are disjoint if $r\neq r'.$ The open subset $Y_r\subset X_r$ extends
        to an open $\mathcal{Y}_r\subset\mathcal{X}_r$ such that the elements of $H^1(\Gal(K'_r/K_r),\mathbb{Q}/\mathbb{Z})$
        come from elements of $H^1_{\scriptsize{\mbox{\'et}}}(\mathcal{Y}_r,\mathbb{Q}/\mathbb{Z}).$
        The scheme $\mathcal{Y}_r$ is smooth over $O_{k,S_1}.$
\item[-] We denote by $\widetilde{\infty}$ the Zariski closure of $\infty$ in $\mathbb{P}^1_{O_{k,S_1}}$ and
        by $\mathcal{X}_\infty$ the fibre $\Pi^{-1}(\widetilde{\infty}).$ The scheme $\widetilde{\infty}$ is disjoint
        with $\widetilde{m}_r$ for all $r.$ The open subset $Y_\infty\subset X_\infty$
        extends to an open $\mathcal{Y}_\infty\subset\mathcal{X}_\infty$
        such that the elements $\partial_{A,\infty}$ ($A\in\Lambda\cup E$)
        come from elements of $H^1_{\scriptsize{\mbox{\'et}}}(\mathcal{Y}_\infty,\mathbb{Q}/\mathbb{Z}).$
        The scheme $\mathcal{Y}_\infty$ is smooth over $O_{k,S_1}.$
\item[-] We set $\mathcal{V}=\mathbb{P}^1_{O_{k,S_1}}\setminus\bigcup\widetilde{m}_r,$
        $\mathcal{V}_0=\mathcal{V}\setminus\widetilde{\infty},$
        $\mathcal{Y}=\mathcal{X}\setminus\bigcup\mathcal{X}_r,$ and $\mathcal{Y}_0=\mathcal{Y}\setminus\mathcal{X}_\infty.$
         The elements of $\Lambda\cup E\subset \Br(Y_0)$ come from elements of $\Br(\mathcal{Y}_0).$
\item[-] The finite connected \'etale Galois cover $W_r\to Y_r$ extends to an $O_{k_r,S_1}$-morphism $\mathcal{W}_r\to\mathcal{Y}_r$
        factorizing through $\mathcal{Y}'_r=\mathcal{Y}_r\times_{O_{k_r,S_1}}O_{k'_r,S_1}\to\mathcal{Y}_r$
        such that $\mathcal{W}_r\to\mathcal{Y}_r$ is a Galois cover of group $\Gal(K'_r/K_r)$ and
        $\mathcal{W}_r\to\mathcal{Y}'_r$ is a Galois cover of group $\Gal(K'_r/K_rk'_r).$ And similarly for $r=\infty.$
\item[-] Lemma \ref{LangWeil} applies to $O=O_{k,S_1}.$
\item[-] For $1\leqslant r\leqslant l,$ the geometric Chebotarev's density theorem \cite[Lem. 1.2]{Ekedahl} applies to
            the $O_{k'_r,S_1}$-morphism $\mathcal{W}_r\to\mathcal{Y}'_r$, \emph{i.e.}
            the cardinality of $k'_r(v_r)$ for all $v_r\in\Omega_{k'_r}\setminus S_1\otimes_kk'_r$
            is large enough to guarantee the existence of a $k'_r(v_r)$-point of $\mathcal{Y}'_r$ having its Frobenius element
            in a given conjugacy class of $\Gal(\mathcal{W}_r/\mathcal{Y}'_r)=\Gal(K'_r/K_rk'_r).$
\end{itemize}
\end{step}

\medskip
\begin{step}\label{application-formal-moving lemma} \emph{Application of formal lemma and moving lemmas.}

Fix a closed point $z_0$ of $X$ such that the 0-cycle $y_0=\pi_*(z_0)$ is a closed point having
the same residue field as $z_0$ and such that $y_0$ is different from $\infty$ and the points in $D,$
denote by $d_0$ the degree of $z_0.$

Let $\{z_v\}_{v\in\Omega_k}$ be a family of local 0-cycles of degree $\delta$ orthogonal to $\Br(X).$
By Lemma \ref{moving 1}, we may assume that each $z_v$ is supported in $Y_0$ and $\pi_*(z_v)$ is supported
disjoint from $D.$
According to the formal lemma \ref{formal lemma} applied to $\pi^*(A_{i,j})\in \Br(\pi^{-1}(\mathbb{P}^1\setminus D))$
and the elements in $\Lambda\cup E\subseteq \Br(Y_0),$ there exists a
finite set of places $S_2$ of $k$ containing $S_1$ such that and for each $v\in S_2$ a 0-cycle $z'_v$
of degree $\delta$ supported in $Y_0$ and away from fibres above $D$ such that
\begin{itemize}
\item[-] $\sum_{v\in S_2}\inv_v(\langle A_{i,j},z'_v\rangle_v)=0$ for all $A_{i,j};$
\item[-] $\sum_{v\in S_2}\inv_v(\langle A,z'_v\rangle_v)=0$ for all $A\in\Lambda\cup E;$
\item[-] $z'_v=z_v$ for all $v\in S_1.$
\end{itemize}
In the rest of the proof, it suffice to approximate $z'_v$ for every $v\in S_2.$
We write in a unique way $z'_v=z^+_v-z^-_v,$ where $z^+_v$ and $z^-_v$ are effective 0-cycles with
support disjoint from each other.
Recall that we are considering the images of 0-cycles in $\CH_0(X_v)/N,$ let $a$ be a positive integer which
is divisible by $N$ and who annihilates all the elements $A\in\Lambda\cup E$ and $A_{i,j}.$
Then the 0-cycle $z^1_v=z'_v+ad_0z^-_v=z^+_v+(ad_0-1)z^-_v$ is effective of degree
congruent to $\delta$ modulo $ad_0.$ We add to each $z^1_v$ a suitable positive multiple of
$az_0$ and obtain $z^2_v$ of the same degree $d$ for all $v\in S_2.$
Moreover $d\equiv\delta(\mbox{mod }ad_0)$ can be taken to be sufficiently large
such that Proposition \ref{Hilbert's irreducibility} applies.
By Lemma \ref{moving 2}, for each $v\in S_2,$ there exists an effective 0-cycle $\tau_v$ of degree $d$
close enough to $z^2_v$ and such that it is supported in $Y_0$ and away from the
fibres above $D$ and such that $\pi_*(\tau_v)$ is a separable 0-cycle.
By continuity of the Brauer\textendash Manin pairing, note that $a$ annihilates the elements appeared in the pairing, we have
\begin{itemize}
\item[-] $\sum_{v\in S_2}\inv_v(\langle A_{i,j},\tau_v\rangle_v)=0$ for all $A_{i,j};$
\item[-] $\sum_{v\in S_2}\inv_v(\langle A,\tau_v\rangle_v)=0$ for all $A\in\Lambda\cup E.$
\end{itemize}
By \cite[Lem. 1.8]{Wittenberg}, note that $N$ divides $a,$ the 0-cycles $z'_v$ and $\tau_v$ have the same image in
$\CH_0(X_v)/N$ for all $v\in S_2.$
\end{step}

\medskip
\begin{step}\label{construction of assisting places} \emph{Construction of assisting places $v_r.$}

In this \emph{Step} we construct for each $r\in\{1,\ldots,l,\infty\}$ an assisting place $v_r$
with a local 0-cycle, these will give us ability to modify the local points in \emph{Step} \ref{orthogonal to Br}.

Lemma \ref{LangWeil}(d) applied to our fibration with $F_0$ constructed in \emph{Step} \ref{construction F0}
gives us a finite extension $F/k$ containing $F_0.$ Suppose that the generalized Hilbertian subset
$\textsf{Hil}$ is given by a finite morphism $Z\to\mathbb{P}^1,$ let $k'$ be the algebraic
closure of $k$ inside the Galois closure of the field extension $k(Z)/k(t).$

For each $r\in\{1,\ldots,l,\infty\},$ let $v_r$ be a place of $k$ outside $S_2$ and which splits completely in $F\cdot k'.$
We may also suppose that they are different from each others.

As consequence, for $1\leqslant r\leqslant l$ the polynomial $Q_r(t)$ modulo $v_r$ has a simple root in $k(v_r)$
which lifts to a $k_{v_r}$-point $x_r$ of $\mathbb{A}^1$
satisfying $v_r(Q_r(x_r))=1,$ moreover $x_v$ can be chosen different from all the $P_i$'s.
Similarly, we choose $x_\infty\in k^*_{v_\infty}\subset\mathbb{A}^1(k_{v_\infty})$ different
from the $P_i$'s and such that $v_\infty(1/x_\infty)=1.$
Lemma \ref{closed point} permit us to chose a closed point $x'_r$ of $\mathbb{A}^1$ of degree $d-1$ different from the $P_i$'s,
then $x_r+x'_r$ is a separable effective 0-cycle of degree $d.$
\end{step}

\medskip
\begin{step}\label{application hilbert irred} \emph{Application of Hilbert's irreducibility theorem.}

With the field $F,$ the closed points $P_i,$ and the generalized Hilbertian subset $\textsf{Hil}\cap V_0,$
we apply Proposition \ref{Hilbert's irreducibility} to local 0-cycles $\pi_*(\tau_v)$ for $v\in S_2$
as well as $x_r+x'_r$ for $r\in\{1,\ldots,l,\infty\}.$
By Chebotarev's density theorem, we obtain an infinite set $I$ of the places of $k$ which split completely
in $F,$ \emph{a fortiori} split completely in $k_r,$ $k_{i,j}$ and $k'_\infty.$ According to the construction of
$I$ in the proof of \ref{Hilbert's irreducibility}, the places $v_r$ belong to $I$ for $r\in\{1,\ldots,l,\infty\}.$
We also obtain a closed point $\theta\in \textsf{Hil}\subset V_0\subset\mathbb{A}^1$ of degree $d$
sufficiently close to $\pi_*(\tau_v)$ for $v\in S_2$ and sufficiently close to $x_r+x'_r$ for $r\in\{1,\ldots,l,\infty\},$
as a $k(\theta)$-point of $\mathbb{A}^1$
it is an $S_2\cup I$-integer. Moreover, let $f\in k[t]$ be the polynomial defining $\theta$ constructed in
Proposition \ref{Hilbert's irreducibility}, for each $1\leqslant i\leqslant n$ we obtain
a place $w_i$ of $k_i$ away from $S_2\cup I$ such that $w_i(f(P_i))=1$ and $w(f(P_i))=0$ for all
$w\in\Omega_{k_i}\setminus(S_2\cup I)\otimes_kk_i$ different from $w_i.$
\end{step}

\medskip
\begin{step}\label{local points} \emph{The fibre $X_\theta$ has points locally everywhere.}

Let $w\in\Omega_{k(\theta)}$ be a place above $v\in\Omega_k.$

If $v\in S_2,$ the implicit function theorem shows that $X_\theta$ possesses $k(\theta)_w$-points.

If $v\in I,$ Lemma \ref{LangWeil}(d) implies that $X_\theta$ possesses $k(\theta)_w$-points.

If $v\notin S_2\cup I,$ we denote by $\tilde{\theta}\simeq Spec(A)$ the Zariski closure of
$\theta$ in $\mathbb{P}^1_{O_{k,S_2}},$ where $A$ is a finite $O_{k,S_2}$-algebra with fraction field $k(\theta)$
and its integral closure in $k(\theta)$ is $O_{k(\theta),S_2}.$
We fix a place $w$ of $k(\theta)$ above $v,$ it defines closed point $w$ of the normalization
$Spec(O_{k(\theta),S_2})$ of $\tilde{\theta}$ lying above a certain closed point $w_\theta\in\tilde{\theta}.$
Recall that $\tilde{\theta}$ and $T_i$ are locally defined respectively by $f$ and $g_i$
(polynomials with $O_{S_2\cup I}$-integral coefficients).
There are two possible cases.
\begin{itemize}
\item[(i)] If $w_\theta$ is contained in one (unique) of the $T_i$'s. We know that
for $w'\in\Omega_{k_i}\setminus(S_2\cup I)\otimes_kk_i,$ we have
$w'(f(P_i))=0$ except only one possible case where $w'=w_i$ and $w_i\in\Omega_{k_i}\setminus(S_2\cup I)\otimes_kk_i,$
for which we have $w_i(f(P_i))=1.$
The point $w_\theta$ is contained in $T_i$ if and only if the exceptional case happens.
In such a case, considering the intersection $T_i\cap\tilde{{\theta}}$ at the point $w_i,$
the intersection multiplicity equals to $1$
since $w_i(f(P_i))=1.$ Then $w_i,$ viewed as a closed point $w_{\theta}$ of $\tilde{{\theta}},$
must be a regular point of $\tilde{{\theta}}.$ Therefore $w=w_{\theta}=w_i,$
$k_{iw_i}=k({\theta})_w$ and $w(g_i({\theta}))=w_i(f(P_i))=1.$

\item[(ii)] If $w_{\theta}\notin T_i$ for all $i,$ then the fibre $\mathcal{X}_{w_{\theta}}/k(w_{\theta})$
is split by the construction of $T_i,$ thus
$X_{\theta}(k({\theta})_w)\neq\emptyset$ according to Lemma \ref{LangWeil}(b).
In this case, we know that $g_i({\theta})$ is a unit (modulo $w_{\theta}$) in
$k(w_{\theta})\subset k(w)$ since $w_{\theta}\notin T_i\cap\tilde{{\theta}},$ then $w(g_i({\theta}))=0.$
\end{itemize}

Remark that if $w_i\in I\otimes_kk_i,$ case (i) will never happen.
To complete this \emph{Step}, it remains to verify that for $w=w_i\in T_i$ (case (i) if it happens) the fibre
$X_\theta$ possesses $k(\theta)_{w_i}$-points. This will occupy the rest of this \emph{Step} and we may
assume that $w_i\in\Omega_{k_i}\setminus(S_2\cup I)\otimes_kk_i.$

We define $E_i=k_i\otimes_kk({\theta})$ and $F_{i,j}=k_{i,j}\otimes_kk({\theta}).$
Then
$$\langle A_{i,j},{\theta} \rangle _{\mathbb{P}^1}=cores_{k({\theta})/k}cores_{E_i/k({\theta})}(F_{i,j}/E_i,g'_i({\theta}))\in \Br(k)$$
by definition.

By continuity of the Brauer\textendash Manin pairing,
$$\sum_{v\in S_2} \inv_v(\langle A_{i,j},{\theta} \rangle _v)=\sum_{v\in S_2} \inv_v(\langle A_{i,j},\pi_*(\tau_v)\rangle _v)=0,$$
hence
$$\sum_{v\in \Omega_k\setminus S_2} \inv_v(\langle A_{i,j},{\theta} \rangle _v)=0$$ since ${\theta}$ is global.
In other words
$$\sum_{v\in \Omega_k\setminus S_2}\inv_v(cores_{k({\theta})/k}cores_{E_i/k({\theta})}(F_{i,j}/E_i,g'_i({\theta})))=0,$$
$$\sum_{w\in \Omega_{k({\theta})}\setminus S_2\otimes_kk({\theta})}\inv_w(cores_{E_i/k({\theta})}(F_{i,j}/E_i,g'_i({\theta})))=0.$$

We consider a place $w$ of $k(\theta)$ away from $S_2,$ and we are going to calculate each term in the sum above.

If $w\in I\otimes_kk(\theta),$ let $v$ be the place of $k$ below $w.$
By construction, the extension of local fields associated to $k_{i,j}/k_i$ is trivial above the place $v,$
then the extension $F_{i,j}/E_i$ is trivial above the place $w,$
we find that $$\inv_w(cores_{E_i/k({\theta})}(F_{i,j}/E_i,g'_i({\theta})))=0.$$

If $w\notin I\otimes_kk(\theta)$ and $w\neq w_i$ (\emph{i.e.}, the point $w_{\theta}\in\tilde{\theta}$ associated to $w$
is not in $T_i$),
we recall that in this case $w(g_i(\theta))=0,$ then $g_i({\theta})=N_{E_i/k(\theta)}(g'_i(\theta))$ is a unit at $w,$ we also obtain
$$\inv_w(cores_{E_i/k({\theta})}(F_{i,j}/E_i,g'_i({\theta})))=0.$$

Therefore we get finally
$$\mbox{ }\inv_{w_i}(cores_{E_i/k({\theta})}(F_{i,j}/E_i,g'_i({\theta})))=0.\leqno(\star)$$

Consider the natural map $E_i\to E_i\otimes_{k({\theta})}k({\theta})_{w_i},$ where $E_i\otimes_{k({\theta})}k({\theta})_{w_i}$
is a product of extensions of $k({\theta})_{w_i}.$
Note that $w(N_{E_i/k({\theta})}(g'_i({\theta})))=w(g_i({\theta}))$ equals to either  $0$ or $1$ according to $w\neq w_i$
or $w=w_i,$ there is only one of these extensions, denoted by $E_{i,w_i},$
in which the image of $g'_i({\theta})$ is not
a unit but a uniformizer, and moreover, $E_{i,w_i}/k({\theta})_{w_i}$ is trivial.
The equality $(\star)$ implies that $(F_{i,j}/E_i,g'_i({\theta}))\otimes_{E_i}E_{i,w_i}=0,$
we have then for all $j$  the cyclic extension $k_{i,j}/k_i$ is trivial after $\otimes_{E_i}E_{i,w_i}$
since $g'_i({\theta})$ is a uniformizer of $E_{i,w_i},$ we find that
$k'_i/k_i$ is trivial after $\otimes_{E_i}E_{i,w_i}.$
By Lemma \ref{LangWeil}(c), the reduction  $\mathcal{X}_{w_i}/k(w_i)$
of $X_\theta$ modulo $w_i$ is split, and
$X_{\theta}$ contains a $k({\theta})_{w_i}$-point by Lemma \ref{LangWeil}(b).

The following final remark will be used in the next \emph{Step}.
For all places $w\in\Omega_{k(\theta)}$ outside $S_2,$ the existence of $k(\theta)_w$-points of $X_\theta$ is deduced as above
by applying Lemma \ref{LangWeil}. In other words, these local points are in fact integral (with respect to $w$) points
obtained by lifting of points of finite residue fields.
As remarked in the proof of Lemma \ref{LangWeil}, they can be chosen to be integral points of the integral
model $\mathcal{X}\setminus(\bigsqcup_r\mathcal{X}_r\setminus\mathcal{Y}_r)$ of the Zariski open
$X\setminus(\bigsqcup_rX_r\setminus Y_r)\subset X$ where $r$ runs through $\{1,\ldots,l,\infty\}.$
\end{step}

\medskip
\begin{step}\label{orthogonal to Br} \emph{Orthogonality to the Brauer group.}

For each $w\in\Omega_{k(\theta)},$ let $M_w$ denote the $k(\theta)_w$-point we found
on the fibre $X_\theta$ in \emph{Step} \ref{local points}. We know by continuity of the
Brauer\textendash Manin pairing and the projection formula that
$$\sum_{w\in S_2\otimes_kk(\theta)}\inv_w(A(M_w))=\sum_{v\in S_2}\inv_v(\langle A,\tau_v\rangle_v)=0$$
for all $A\in\Lambda\cup E.$
In this \emph{Step}, we are going to compute
$\inv_w(A(M_w))$ for $w$ outside $S_2$ and modify certain $M_w$ such that they satisfy the equality
$$\sum_{w\in\Omega_{k(\theta)}}\inv_w(A(M_w))=0$$
for all $A\in\Lambda.$

\begin{substep}\label{classification of places}\emph{Classification of places of $k(\theta).$}

As a 0-cycle $\theta$ is sufficiently close to $x_r+x'_r$ for $1\leqslant r\leqslant l,$ this means that there exists
a place $w_r^0$ of $k(\theta)$ above $v_r$ such that the extensions $k(\theta)_{w^0_r}/k_{v_r}$ and
$k(\theta)(w^0_r)/k(v_r)$ are trivial and the image of $\theta$ in $\mathbb{A}^1(k(\theta)_{w^0_r})$
is sufficiently close to $x_r.$
Hence $w^0_r(Q_r(\theta))=v_r(Q_r(\theta))=v_r(Q_r(x_r))=1,$ \emph{a fortiori} $w^0_r(\theta)\geqslant0$ since
the coefficients of $Q_r$ are integers at $v_r.$ Similarly, there exists a place $w^0_\infty$ of $k(\theta)$
above $v_\infty$ such that $w^0_\infty(1/\theta)=1.$

We consider the reduction of $\theta\in V_0\subset\mathbb{P}^1$ modulo a place $w\in\Omega_{k(\theta)}\setminus S_2\otimes_kk(\theta),$
there are three possibilities:
\begin{itemize}
\item[(a)] it is in $\mathcal{V}_0,$ if and only if $w(Q_r(\theta))=0$ (\emph{a fortiori} $w(\theta)\geqslant0$), we denote by
            $\Omega_0\subset\Omega_{k(\theta)}$ the subset of such places;
\item[(b)] it is in one (unique) of the $\widetilde{m}_r\mbox{'s }(1\leqslant r\leqslant l),$ if and only if $w(Q_r(\theta))>0$
            (\emph{a fortiori} $w(\theta)\geqslant0$), we denote by $\Omega_r\subset\Omega_{k(\theta)}$ the subset of such
            places, in particular $w^0_r\in\Omega_r;$
\item[(c)] it is in $\widetilde{\infty}$ if and only if $w(\theta)<0,$ we denote by $\Omega_\infty\subset\Omega_{k(\theta)}$
            the subset of such places. Then $\Omega_\infty\subset I\otimes_kk(\theta)$ since $\theta$ is an $S_2\cup I$-integer.
\end{itemize}
The subsets $\Omega_0,\Omega_1,\ldots,\Omega_l,\Omega_\infty$ form a partition of $\Omega_{k(\theta)}\setminus S_2\otimes_kk(\theta),$
and $\Omega_r$ is finite for $r\in\{1,\ldots,l,\infty\}.$
\end{substep}

\begin{substep}\label{computation of A(M_w)} \emph{Computation of $A(M_w).$}

For $w\in\Omega_{k(\theta)}\setminus S_2\otimes_kk(\theta),$ we want to compute $A(M_w)$ for $A\in\Lambda\cup E\subset \Br(Y_0).$
By construction in \emph{Step} \ref{local points}, the point $M_w$ is a lifting (by Hensel's lemma) of its modulo-$w$-reduction
$M(w)$ of $\mathcal{Y}_r$ according to $w\in\Omega_r(r\in\{0,1,\ldots,l,\infty\}).$

We find that, by \cite[Cor. 2.4.3]{Harari}, for $A\in \Lambda\cup E$
\begin{itemize}
\item[(a)] $\inv_w(A(M_w))=0$ if $w\in\Omega_0;$
\item[(b)] $\inv_w(A(M_w))=w(Q_r(\theta))\cdot\partial_{A,r,M(w)}(F_{r,M(w)})$ if $w\in\Omega_r(1\leqslant r\leqslant l),$
            where $\partial_{A,r,M(w)}$ is the evaluation of the element
            $\partial_{A,r}\in H_{\scriptsize{\mbox{\'et}}}^1(\mathcal{Y}_r,\mathbb{Q}/\mathbb{Z})$
            at the point $M(w)$ of $\mathcal{Y}_r,$ and where $F_{r,M(w)}\in \Gal(\mathcal{W}_r/\mathcal{Y}_r)$ is
            the Frobenius element at $M(w).$
\item[(c)] $\inv_w(A(M_w))=w(1/\theta)\cdot\partial_{A,\infty,M(w)}(F_{\infty,M(w)})$ if $w\in\Omega_\infty,$ where
            $\partial_{A,\infty,M(w)}$ is the evaluation of
            $\partial_{A,\infty}\in H_{\scriptsize{\mbox{\'et}}}^1(\mathcal{Y}_\infty,\mathbb{Q}/\mathbb{Z})$ at
            $M(w)\in\mathcal{Y}_\infty,$ and where $F_{\infty,M(w)}\in \Gal(\mathcal{W}_\infty/\mathcal{Y}_\infty)$ is
            the Frobenius element at $M(w).$
\end{itemize}
\end{substep}

\begin{substep}\label{computation for w infty} \emph{Computation for $w\in\Omega_\infty.$}

If $A\in E,$ it comes from an element of $\Br(k(t))$ which becomes $0$ in
$\Br(k'(t)).$ Moreover, the place $v$ of $k$ below the place $w\in\Omega_\infty$ belongs to $I\otimes_{k}k(\theta),$
thus the field $k(\theta)_w$ contains $k_v=k'_{v'}$ for all $v'$ above $v,$
then $A=0$ in $\Br(k(\theta)_w(t)).$
Hence $\inv_w(A(M_w))=0$ for $w\in\Omega_\infty$ and $A\in E.$

Since $v$ splits completely also in $k'_{\infty},$ the Frobenius element $F_{\infty,M(w)}$ lies automatically in $\Gal(K'_\infty/K_\infty k'_\infty).$
\end{substep}

\begin{substep}\label{computation for w r} \emph{Computation for $w\in\Omega_r(1\leqslant r\leqslant l).$}

\emph{A priori}, the element $\sum_{w\in\Omega_r}w(Q_r(\theta))\cdot F_{r,M(w)}$ (additive notation) lies in
the abelian group $\Gal(K'_r/K_r),$ we are going to prove that this element belongs to the subgroup
$\Gal(K'_r/K_rk'_r).$

Let $\rho_r$ be an arbitrary element of $G'_r=H^1(\Gal(K_rk'_r/K_r),\mathbb{Q}/\mathbb{Z})\subset G_r,$
then there exists an element $A_r$ of $E\subset \Br(k(t))$ whose only possible non-zero residue is at $m_r$
giving value $\rho_r.$ In this case, we have $\inv_w(A_r(M_w))=0$ for all $w\in\Omega_{r'}$ if $r'\neq r$ and
$1\leqslant r'\leqslant l.$ On the other hand, as the specialization of $A_r\in \Br(k(X))$ at $X_\theta$
comes from $\Br(k(\theta)),$ we find the equality $\sum_{w\in\Omega_{k(\theta)}}\inv_w(A_r(M_w))=0.$
Combining the computations for places in $\Omega_0$ and $\Omega_\infty$ done in \emph{Steps} \ref{computation of A(M_w)} and
\ref{computation for w infty}
$$\sum_{w\in\Omega_r}\inv_w(A_r(M_w))=0,$$
in other words,
$$\sum_{w\in\Omega_r}\rho_r\left(w(Q_r(\theta))\cdot F_{r,M(w)}\right)=0$$
for all $\rho_r\in G'_r.$
Hence the element $\sum_{w\in\Omega_r}w(Q_r(\theta))\cdot F_{r,M(w)}$ lies in $\Gal(K'_r/K_rk'_r).$
\end{substep}

\begin{substep}\label{modification} \emph{Modification of $M_{w_r^0}(1\leqslant r\leqslant l,\mbox{ }r=\infty)$.}

Recall that for $1\leqslant r\leqslant l$ there exists a place $w^0_r\in\Omega_r$ above $v_r$ with
$w^0_r(Q_r(\theta))=1$ and $w^0_\infty(1/\theta)=1$ for $r=\infty$, moreover the extensions $k(\theta)_{w^0_r}/k_{v_r}$ and $k(\theta)(w^0_r)/k(v_r)$
are trivial. As $v_r$ splits completely in $k'_r,$ the Frobenius element $F_{r,M(w^0_r)}$
lies in $\Gal(K'_r/K_rk'_r).$

The geometric Chebotarev's density theorem \cite[Lem. 1.2]{Ekedahl} implies the existence of a
point $M'(w^0_r)$ of $\mathcal{Y}_r$ whose Frobenius element $F_{r,M'(w^0_r)}$ is exactly
the element $F_{r,M(w^0_r)}-\sum_{w\in\Omega_r}w(Q_r(\theta))\cdot F_{r,M(w)}\in \Gal(K'_r/K_rk'_r).$
We lift it to a $k(\theta)_{w^0_r}$-point $M'_{w^0_r}$ of $X_\theta.$ We find a point $M'_{w^0_\infty}$ with similar property for $r=\infty$ as well.

For each $1\leqslant r\leqslant l\mbox{ and }r=\infty,$ we replace $M_{w_r^0}$ by $M'_{w^0_r}$ and we keep $M_w$ for all
$w\in\Omega_r\setminus\{w^0_r\}.$ We verify that $$\sum_{w\in\Omega_r}\inv_w(A(M_w))=0$$
for $A\in\Lambda$ and for all $1\leqslant r\leqslant l.$ Therefore
$$\sum_{w\in\Omega_{k(\theta)}}\inv_w(A(M_w))=0$$ for all $A\in\Lambda.$
\end{substep}
\end{step}

\medskip
\begin{step}\label{end of proof} \emph{End of the proof.}

The specialization map $sp_\theta$ maps $\Lambda$ on to $\Br(X_\theta)/\Br(k(\theta)),$ the family
$\{M_w\}_{w\in\Omega_{k(\theta)}}$ of local points on $X_\theta$ is then orthogonal to $\Br(X_\theta).$
The hypothesis gives a global 0-cycle $z'$ of degree $1$ on $X_\theta$ such that
$z'$ and $M_w$ have the same image in $\CH_0(X_{\theta,w})/N$ for all $w\in S_2\otimes_kk(\theta).$
The 0-cycle $z'$ is regarded as a 0-cycle of degree $d\equiv\delta(\mbox{mod }ad_0)$ on $X,$
by subtracting a suitable multiple of $az_0$ from $z',$ we get a global 0-cycle $z$ of degree $\delta$ on $X$
such that $z$ and $z_v$ have the same image in $\CH_0(X_v)/N$ for all $v\in S.$
\end{step}
\end{proof}

\begin{rem}\label{weakBr}
Actually a sharper statement can be proved: we can allow one fibre over a certain rational point of $\mathbb{P}^{1}$  not to be abelian-split and only require this fibre to contain an irreducible component of multiplicity $1.$
In fact, we can choose the point $\infty$ to be the image of this fibre. We fix a multiplicity $1$ irreducible component $X^{irr}_{\infty}$  of the fibre $X_{\infty},$ then we set $K_{\infty}$ to be its function field and choose a non-empty smooth open subset $Y_{\infty}$ of $X^{irr}_{\infty}$ which does not intersect with other irreducible components of $X_{\infty}$. In the above proof, only the fibres over $\widetilde{\infty}$ have different properties.
To see that the argument still works, we need to check that the fibre $X_{\theta}$ has local points at all places $w\in\Omega_{k(\theta)}$.
The only difference appears when the reduction of $X_{\theta}$ modulo $w\notin S_{2}\otimes_{k}k(\theta)$ is a fibre over a closed point of $\widetilde{\infty}.$ In such a case  $w\in\Omega_{\infty}\subset I\otimes_{k}k(\theta)$,  then a point of its reduction lifts to a $k(\theta)_{w}$-point of $X_{\theta}$ according to Lemma \ref{LangWeil}(d).
All the computation still works. Furthermore, we use only the fact that $\Lambda\subset \Br(Y\setminus X_{\infty}),$ therefore it suffices to assume a weaker form of the hypothesis $(\Br)$: the subjectivity of $\Br(Y\setminus X_{\infty})\to \Br(X_{\eta})/\Br(k(t)).$

Let $Y'$ be an open subset of $Y$ such that their projections to $\P^1$ coincide.
In the hypothesis and in the proof one can even replace $Y$ by $Y'$, the whole argument still works. Therefore the hypothesis
$(\Br)$ can even be  weaken to the subjectivity of $\Br(Y'\setminus X_{\infty})\to \Br(X_{\eta})/\Br(k(t)).$ In other words, this weakened hypothesis only requires that on each non-split fibre other than $\infty$ there exists at least one irreducible component such that the elements in $\Lambda$ are all unramified with respect to the residue map at the generic point of that component.

\end{rem}


\section{Explicit examples}\label{application}

First of all, in \S \ref{Wei}, we explain why Theorem \ref{main_thm} can be applied to varieties considered by Wei \cite[Thm. 4.1]{Wei}
to obtain the exactness of $(\E)$. In \S \ref{previous}, we look at two extreme cases, in which the hypothesis $(\Br)$ is automatically
satisfied and Theorem \ref{main_thm} reduces into two known results. For each case, we give an example.

\subsection{The recent result of Wei on normic equations}\label{Wei}

In a recent paper of Wei \cite{Wei}, by explicit computations of elements in the Brauer group he has proved
that the Brauer\textendash Manin obstruction is the only obstruction to the Hasse principle for 0-cycles of degree $1$ on
varieties defined by several specific normic equations, which can not be covered by previous work. One of them is presented as follows.

Consider the equation over a number field $k$
$$N_{K/k}(\overrightarrow{\textbf{x}})=P(t)$$
where $K/k$ is a finite abelian extension of degree $n$ and $P(t)\in k[t]$ is a non-zero polynomial.
Let $X^{\tiny{\textup{CTHS}}}$ be the CTHS partial compactification (named after the authors) of the closed
sub-variety of $\mathbb{A}^{n+1}$ defined by the equation, it admits a morphism $X^{\tiny{\textup{CTHS}}}\to\mathbb{P}^1,$
\emph{cf.} \cite[\S 2]{CTHS} for the construction.
Let $X$ be a smooth compactification of $X^{\tiny{\textup{CTHS}}},$ the extended fibration $X\to\mathbb{P}^1$ satisfies (gen) and (ab-sp).
We denote by $T$ the norm one $k$-torus $R^1_{K/k}\mathbb{G}_m$ defined by $N_{K/k}(\overrightarrow{\textbf{x}})=1.$
Assuming an additional hypothesis that $\sha^2_\omega(\widehat{T})_P=\sha^2_\omega(\widehat{T}),$
\emph{cf.} \cite[\S 2]{CTHS} for definition of these groups, Wei proved that the Brauer\textendash Manin obstruction is the only obstruction
to the Hasse principle for 0-cycles of degree $1$ on $X,$ \cite[Thm. 4.1]{Wei}. He gave an explicit example
satisfying this additional hypothesis, \cite[Cor. 3.3]{Wei} or Corollary \ref{maincor} of the present paper. His example is essentially new in the sense that
$\Gal(K/k)\simeq\mathbb{Z}/n\mathbb{Z}\times\mathbb{Z}/n\mathbb{Z}$ is not cyclic. It is remarked that the additional
hypothesis is equivalent to the surjectivity of $\Br(X^{\tiny{\textup{CTHS}}})\to \Br(X_\eta)/\Br(k(t)),$
\emph{cf.} the remark before Corollary 3.4 in \cite{Wei}.
It is pointed out by Smeets that Wei's hypothesis is equivalent to the weakened hypothesis $(\Br)$ in Remark \ref{weakBr}, \textit{cf.} \cite[Prop. 4.1]{Smeets}.
Therefore we obtain the exactness of $(\E)$ for such varieties (Corollary \ref{maincor}), we also recover Wei's result on the
Hasse principle for 0-cycles of degree $1$.

\subsection{Previous results for 0-cycles}\label{previous}

\subsubsection{} If all the closed fibres of $\pi:X\to\mathbb{P}^1$ are split (in particular, if they are geometrically integral),
the hypotheses (ab-sp) and $(\Br)$ are automatically verified. This special case has been proved by the author
in \cite{Liang2}, but the following example has not been discussed.

Consider normic equations $N_{K/k}(\overrightarrow{\textbf{x}})=P_{s}(t)$ where $K/k$ is a finite extension (not necessarily abelian nor Galois) and where $P_{s}(t)\in k[s,t]$ is a polynomial viewed as a family of polynomials parameterized via $s.$ It defines a fibration $X\to\mathbb{P}^{1}$.  If for arbitrary $\bar{k}$ value $\theta$ of $s$, the polynomial $P_{\theta}(t)\in k(\theta)[t]$ is not zero, then the equation defines a geometrically integral $k(\theta)$-variety $X_{\theta}$.
If $P_{s}(t)=c(t-a(s))^{m}(t-b(s))^{n}$ with $m,n\in\mathbb{N},$ $c\in k^{*},$ and $a(s),b(s)\in k[s],$ according to \cite[Thm. 1]{SJ} the Brauer\textendash Manin obstruction is the only obstruction to weak approximation for rational points on $X_{\theta}.$ Theorem \ref{main_thm} implies that $(\E)$ is exact for $X$. Similar examples can be constructed  considering other normic equations.

\subsubsection{}
The hypothesis $(\Br)$ is also automatically verified if the morphism $\Br(k(t))\to \Br(X_\eta)$ is surjective
(\emph{e.g.} certain fibrations in Ch\^atelet surfaces).
By Proposition \ref{comparison Br}
the Brauer group $\Br(X_\theta)$ consists only elements coming from $\Br(k(\theta))$
for all $\theta\in \textsf{Hil}.$ The condition (1) (resp. (2)) of the main theorem \ref{main_thm}
becomes
\begin{itemize}
\item[-]
The $k(\theta)$-variety $X_\theta$ satisfies the Hasse principle (resp. weak approximation)
for rational points or for 0-cycles of degree $1.$
\end{itemize}
This special case has also been proved by the author in \cite{Liang3}.
The following is an example of this kind, nevertheless, it has not appeared in the literature.

Consider the normic equation $N_{K/k}(\overrightarrow{\textbf{x}})=P_{s}(t)$, where $K/k$ is an abelian extension of degree $4$ and where $P_{s}(t)=a(s)t^{2}+b(s)t+c(s)\in k(s)[t]$ is a quadratic polynomial with $a(s),b(s),c(s)\in k(s)$ such that
$\Delta(s)=b(s)^{2}-4a(s)c(s)\in K(s)^{*2}\setminus k(s)^{*2}.$
It defines birationally a fibration $X$ over $\mathbb{P}^{1}$ via the parameter $s$. It can be shown by the same argument as \cite[Lem. 3.2]{Liang5} that all such fibrations satisfy (ab-sp).
The quadratic polynomial $P_{s}(t)$ defines a double cover of the projective line and hence defines a generalized Hilbertian subset $\textsf{Hil}\subset\mathbb{P}^{1},$ by shrinking  $\textsf{Hil}$ one may assume that the residual field $k(\theta)$ is linearly disjoint from $K/k$ for all $\theta\in\textsf{Hil}$.
By definition $P_{s}(t)$ is irreducible over $k(s)$ and splits over $K(s),$ hence for any $\theta\in\textsf{Hil}$ the specialization $P_{\theta}(t)$ is an irreducible polynomial over $k(\theta)$ that splits over the degree $4$ extension $K(\theta).$ The fibre over such a point $\theta$ satisfies weak approximation for rational points according to \cite[Thm. 1]{DSW}. We also know that the Brauer group of the generic fibre of such a fibration is trivial \cite[Thm. 1]{DSW2}. Theorem \ref{main_thm} applies and we obtain the exactness of $(\E)$ for $X$.



\bibliographystyle{alpha}
\bibliography{mybib1}

\begin{thebibliography}{CTSSD87b}

\bibitem[CT95]{CT95}
J.-L. Colliot-Th\'el\`ene.
\newblock L'arithm\'etique du groupe de {C}how des z\'ero-cycles.
\newblock {\em J. Th\'eorie de nombres de Bordeaux}, 7:51--73, 1995.

\bibitem[CT00]{CT99}
J.-L. Colliot-Th\'el\`ene.
\newblock Principe local-global pour les z\'ero-cycles sur les surfaces
  r\'egl\'ees (avec un appendice par {E}. {F}rossard et {V}. {S}uresh).
\newblock {\em J. Amer. Math. Soc.}, 13:101--124, 2000.

\bibitem[CT05]{CT05}
J.-L. Colliot-Th\'el\`ene.
\newblock Un th\'eor\`eme de finitude pour le groupe de {C}how des
  z\'ero-cycles d'un groupe alg\'ebrique lin\'eaire sur un corps $p$-adique.
\newblock {\em Invent. math.}, 159:589--606, 2005.

\bibitem[CTHS03]{CTHS}
J.-L. Colliot-Th\'el\`ene, D.~Harari, and A.N. Skorobogatov.
\newblock Valeurs d'un polyn\^ome \`a une variable repr\'esent\'ees par une
  norme.
\newblock In M.~Reid and A.N. Skorobogatov, editors, {\em Number Theory and
  Algebraic Geometry}, volume 303 of {\em London Mathematical Society Lecture
  Notes series}, pages 69--89. 2003.

\bibitem[CTR85]{CTRaskind}
J.-L. Colliot-Th\'el\`ene and W.~Raskind.
\newblock $k_2$-cohomology and the second {C}how group.
\newblock {\em Mathematische Annalen}, 270:165--199, 1985.

\bibitem[CTS81]{CTSansuc81}
J.-L. Colliot-Th\'el\`ene and J.-J. Sansuc.
\newblock On the {C}how groups of certain rational surfaces : a sequel to a
  paper of {S}.{B}loch.
\newblock {\em Duke Math. J.}, 48:421--447, 1981.

\bibitem[CTSD94]{CT-SD}
J.-L. Colliot-Th\'el\`ene and Sir~Peter Swinnerton-Dyer.
\newblock Hasse principle and weak approximation for pencils of
  {S}everi-{B}rauer and similar varieties.
\newblock {\em J. reine angew. Math.}, 453:49--112, 1994.

\bibitem[CTSSD87a]{chateletsurfaces0}
J.-L. Colliot-Th\'el\`ene, J.-J. Sansuc, and Sir~Peter Swinnerton-Dyer.
\newblock Intersections of two quadrics and {C}hatelet surfaces {I}.
\newblock {\em J. reine angew. Math.}, 373:37--107, 1987.

\bibitem[CTSSD87b]{chateletsurfaces}
J.-L. Colliot-Th\'el\`ene, J.-J. Sansuc, and Sir~Peter Swinnerton-Dyer.
\newblock Intersections of two quadrics and {C}hatelet surfaces {II}.
\newblock {\em J. reine angew. Math.}, 374:72--168, 1987.

\bibitem[CTSSD98]{CT-Sk-SD}
J.-L. Colliot-Th\'el\`ene, A.N. Skorobogatov, and Sir~Peter Swinnerton-Dyer.
\newblock Rational points and zero-cycles on fibred varieties : {S}chinzel's
  hypothesis and {S}alberger's device.
\newblock {\em J. reine angew. Math.}, 495:1--28, 1998.

\bibitem[Deb01]{Debarre}
O.~Debarre.
\newblock {\em Higher-Dimensional Algebraic Geometry}.
\newblock Universitext. Springer, 2001.

\bibitem[DSWa]{DSW2}
U.~Derenthal, A.~Smeets, and D.~Wei.
\newblock Brauer groups and values of quadratic polynomials represented by
  norms.
\newblock Work in progress.

\bibitem[DSWb]{DSW}
U.~Derenthal, A.~Smeets, and D.~Wei.
\newblock Universal torsors and values of quadratic polynomials represented by
  norms.
\newblock Preprint, available at arXiv:1202.3567.

\bibitem[Eke90]{Ekedahl}
T.~Ekedahl.
\newblock An effective version of {H}ilbert's irreducibility theorem.
\newblock In C.~Goldstein, editor, {\em S\'eminaire de th\'eorie des nombres de
  Paris 1988-1989}, volume~91 of {\em Progress in Math.}, pages 241--248.
  Birkh{\"a}user, 1990.

\bibitem[Gro68]{Br}
A.~Grothendieck.
\newblock Le groupe de {B}rauer: {I}, {II}, {III}.
\newblock In {\em Dix expos\'es sur la cohomologie des sch\'ema}, pages
  46--188. North-Holland, 1968.

\bibitem[GS06]{GilleSzamuely}
P.~Gille and T.~Szamuely.
\newblock {\em Central Simple Algebras and {G}alois Cohomology}.
\newblock Cambridge University Press, 2006.

\bibitem[Har94]{Harari}
D.~Harari.
\newblock M\'ethode des fibrations et obstruction de {M}anin.
\newblock {\em J. Duke Math.}, 75:221--260, 1994.

\bibitem[Har97]{Harari2}
D.~Harari.
\newblock Fl\`eches de sp\'ecialisations en cohomologie \'etale et applications
  arithm\'etiques.
\newblock {\em Bull. Soc. Math. France}, 125:143--166, 1997.

\bibitem[KS86]{KatoSaito86}
K.~Kato and S.~Saito.
\newblock Global class field theory of arithmetic schemes.
\newblock {\em Contemporary Math.}, 55:255--331, 1986.

\bibitem[KS03]{Kollar-Szabo}
J.~Koll{\'a}r and E.~Szab{\'o}.
\newblock Rationally connected varieties over finite fields.
\newblock {\em J. Duke Math.}, 120:251--267, 2003.

\bibitem[Liaa]{Liang5}
Y.~Liang.
\newblock The local-global exact sequence for {C}how groups of zero-cycles.
\newblock Preprint available at arXiv:1206.1486.

\bibitem[Liab]{Liang2}
Y.~Liang.
\newblock Principe local-global pour les z\'ero-cycles sur certaines fibrations
  au-dessus de l'espace projectif.
\newblock To appear in Bulletin de la Soci\'et\'e Math\'ematique de France,
  available at arXiv:1011.5995.

\bibitem[Lia13]{Liang3}
Y.~Liang.
\newblock Astuce de {S}alberger et z\'ero-cycles sur certaines fibrations.
\newblock {\em International Mathematics Research Notices}, 2013(3):665--692,
  2013.

\bibitem[Sal88]{Salberger}
P.~Salberger.
\newblock Zero-cycles on rational surfaces over number fields.
\newblock {\em Invent. math.}, 91:505--524, 1988.

\bibitem[San82]{Sansuc82}
J.-J. Sansuc.
\newblock Descente et principe de {H}asse pour certaines vari\'et\'es
  rationnelles.
\newblock In M.-J. Bertin, editor, {\em S\'eminaire de th\'eorie des nombres de
  Paris 1980-1981}, volume~22 of {\em Progress in Math.}, pages 253--271.
  Birkh{\"a}user, 1982.

\bibitem[SJ]{SJ}
M.~Swarbrick~Jones.
\newblock A note on a theorem of {H}eath-{B}rown and {S}korobogatov.
\newblock Preprint available at arXiv:1111.4089.

\bibitem[Sme]{Smeets}
A.~Smeets.
\newblock Principe locaux-globaux pour certaines fibrations en torseurs sous un
  tore.
\newblock Preprint available at arXiv:1305.0756.

\bibitem[Wei]{Wei}
D.~Wei.
\newblock On the equation ${N}_{K/k}({\Xi})={P}(t)$.
\newblock Preprint, available at arXiv:1202.4115.

\bibitem[Wit07]{WittenbergLNM}
O.~Wittenberg.
\newblock {\em Intersections de deux quadriques et pinceaux de courbes de genre
  1}, volume 1901 of {\em Lecture Notes in Mathematics}.
\newblock Springer, 2007.

\bibitem[Wit12]{Wittenberg}
O.~Wittenberg.
\newblock Z\'ero-cycles sur les fibrations au-dessus d'une courbe de genre
  quelconque.
\newblock {\em J. Duke Math.}, 161:2113--2166, 2012.

\end{thebibliography}
\end{document}